\documentclass[11pt]{amsart}

\usepackage{array,float,hyperref,yhmath}
\usepackage{amssymb, amsmath, amsthm, amsbsy, amscd, mathrsfs,stmaryrd}
\usepackage[normalem]{ulem}
\usepackage{hyperref}

  \addtolength{\textheight}{3cm}
  \addtolength{\textwidth}{3cm}
  \addtolength{\voffset}{-1.5cm}
  \addtolength{\hoffset}{-1.5cm}

\numberwithin{equation}{section}
\numberwithin{figure}{section}

%
\setlength{\extrarowheight}{0.05cm} 


\theoremstyle{plain}
 \newtheorem{theorem}{Theorem}[section]
 \newtheorem{proposition}[theorem]{Proposition}
 \newtheorem{lemma}[theorem]{Lemma}
 \newtheorem{corollary}[theorem]{Corollary}

\theoremstyle{definition}

\theoremstyle{remark}

 \newtheorem{remark}[theorem]{Remark}
 \newtheorem*{acknowledgements}{Acknowledgements}
 \newtheorem*{funding}{Funding}
 
 \newtheorem{example}[theorem]{Example}

%

\newcommand{\nl}{\triangleleft}
\newcommand{\nlgr}{\triangleleft_{\textup{gr}}}

\newcommand{\be}{\begin{enumerate}}
\newcommand{\ee}{\end{enumerate}}

\newcommand{\brem}{\begin{remark}}
\newcommand{\erem}{\end{remark}}

\newcommand{\bp}{\begin{proof}}
\newcommand{\ep}{\end{proof}}

\newcommand{\Fq}{\ensuremath{\mathbb{F}_q}}

\newcommand{\N}{\ensuremath{\mathbb{N}}}
\newcommand{\Zp}{\ensuremath{\mathbb{Z}_p}}

\newcommand{\C}{\ensuremath{\mathbb{C}}}

\newcommand{\Q}{\ensuremath{\mathbb{Q}}}
\newcommand{\Z}{\ensuremath{\mathbb{Z}}}

\newcommand{\bfz}{{\bf 0}}

\newcommand{\bfa}{\ensuremath{\mathbf{a}}}

\newcommand{\bfe}{\ensuremath{\mathbf{e}}}
\newcommand{\bfE}{\ensuremath{\mathbf{E}}}
\newcommand{\bff}{\ensuremath{\mathbf{f}}}
\newcommand{\bfF}{\ensuremath{\mathbf{F}}}
\newcommand{\bfg}{\ensuremath{\mathbf{g}}}

\newcommand{\bfr}{\ensuremath{\mathbf{r}}}

\newcommand{\bfS}{\ensuremath{\mathbf{S}}}


\newcommand{\bfy}{\ensuremath{\mathbf{y}}}

\newcommand{\bfB}{\ensuremath{\boldsymbol{B}}}
\newcommand{\bfG}{\ensuremath{\mathbf{G}}}
\newcommand{\bfH}{\ensuremath{\mathbf{H}}}

\newcommand{\bfX}{\ensuremath{\mathbf{X}}}
\newcommand{\bfY}{\ensuremath{\mathbf{Y}}}

\newcommand{\mcB}{\mathcal{B}}

\newcommand{\mcO}{\mathcal{O}}

\newcommand{\mcN}{\ensuremath{\mathcal{N}}}
\newcommand{\mcV}{\ensuremath{\mathcal{V}}}

\newcommand{\mfh}{\ensuremath{\mathfrak{h}}}

\newcommand{\rk}{\ensuremath{{\rm rk}}}

\newcommand{\wt}{\ensuremath{\widetilde}}

\newcommand{\wtB}{\ensuremath{\widetilde{B}}}

\newcommand{\lri}{\mathfrak o}

\newcommand{\la}{\langle}
\newcommand{\ra}{\rangle}

\newcommand{\Gri}{\ensuremath{\mathcal{O}}}

\newcommand{\mfp}{\mathfrak{p}}

\newcommand{\rarr}{\rightarrow}

\newcommand{\udots}{\mathinner{\mskip1mu\raise1pt\vbox{\kern7pt\hbox{.}}
\mskip2mu\raise4pt\hbox{.}\mskip2mu\raise7pt\hbox{.}\mskip1mu}}

\DeclareMathOperator{\gr}{gr}
\DeclareMathOperator{\Des}{Des}
\DeclareMathOperator{\val}{val}
\DeclareMathOperator{\Spec}{Spec}
\DeclareMathOperator{\red}{red}

\DeclareMathOperator{\Id}{Id}

\DeclareMathOperator{\Mat}{Mat}

\DeclareMathOperator{\GL}{GL}

\DeclareMathOperator{\topo}{top}
\DeclareMathOperator{\trans}{tr}
%
\renewcommand{\epsilon}{\varepsilon}
\renewcommand{\phi}{\varphi}


\begin{document}

\title{Ideal zeta functions associated to a family of
  class-2-nilpotent Lie rings}

\date{\today} \author{Christopher Voll} \address{Fakult\"at f\"ur
  Mathematik, Universit\"at Bielefeld\\ Postfach 100131\\ D-33501
  Bielefeld\\Germany} \email{C.Voll.98@cantab.net}

\keywords{Ideal zeta functions, finitely generated nilpotent groups,
  Igusa functions, representation zeta functions, local functional
  equations}

\subjclass[2010]{11M41, 20E07, 11S40}

\begin{abstract} 
  We produce explicit formulae for various ideal zeta functions
  associated to the members of an infinite family of
  class-$2$-nilpotent Lie rings, introduced in
  \cite{BermanKlopschOnn/18}, in terms of Igusa functions. As
  corollaries we obtain information about analytic properties of
  global ideal zeta functions, local functional equations,
  topological, reduced, and graded ideal zeta functions, as well as
  representation zeta functions for the unipotent group schemes
  associated to the Lie rings in question.
\end{abstract}
\maketitle

\thispagestyle{empty}


\section{Introduction and main result}
The main object of this paper is the computation of various ideal zeta
functions associated to the class-$2$-nilpotent Lie rings $L_{m,n}$
defined in \cite{BermanKlopschOnn/18}.

\subsection{Ideal zeta functions of Lie rings and
  algebras}\label{subsec:ideal}
Let $R$ be the ring of integers $\mcO$ of a number field or a compact
discrete valuation ring, such as the completion $\mcO_\mfp$ of $\mcO$
at a nonzero prime ideal $\mfp$ or a formal power series ring of the
form $\Fq\llbracket T \rrbracket$. Let $L$ be a nilpotent $R$-Lie
algebra which is free of finite rank over~$R$. The \emph{ideal zeta
  function} of $L$ is the Dirichlet generating series
$$\zeta^{\nl}_{L}(s) = \sum_{I \nl L} |L : I |^{-s},$$
enumerating $R$-ideals in $L$ of finite index in $L$. Here $s$ is a
complex variable. 

Assume now that $R=\Gri$ is the ring of integers of a number
field~$K$. For a nonzero prime ideal $\mfp \in\Spec(\Gri)$ we write
$\Gri_{\mfp}$ for the completion of $\Gri$ at $\mfp$, a complete
discrete valuation ring of characteristic zero and residue field
$\Gri/\mfp$ of cardinality $q = q_\mfp$, say. We consider
$L(\Gri_{\mfp}):=L\otimes_{\mcO}\mcO_{\mfp}$ as an $\Gri_{\mfp}$-Lie
algebra. Little more than the Chinese Reminder Theorem is necessary to
obtain the Euler product
$$\zeta^{\nl}_{L}(s) = \prod_{\mfp \in\Spec(\Gri)}
\zeta^{\nl}_{L(\Gri_\mfp)}(s);$$
cf.\ \cite[\S~3]{GSS/88}. A deep theorem, in contrast, asserts that
all the Euler factors $\zeta^{\nl}_{L(\Gri_\mfp)}(s)$ are rational
functions in $q_{\mfp}^{-s}$; cf.\
\cite[Theorem~3.5]{GSS/88}. Computing these rational functions is, in
general, a hard problem.

\subsection{The Lie rings $L_{m,n}$ and their ideal zeta functions}
In the current paper we compute explicitly, for any given $m,n\in\N$,
the ideal zeta functions of the $\lri$-Lie algebras $L_{m,n}(\lri):=
L_{m,n} \otimes_\Z \lri$, where $L_{m,n}$ is the class-$2$-nilpotent
Lie ring introduced in \cite{BermanKlopschOnn/18} and $\lri$ is a
compact discrete valuation ring of arbitrary characteristic. Our main
Theorem~\ref{thm:main} expresses the ideal zeta functions
$\zeta^{\nl}_{L_{m,n}(\lri)}(s)$ as rational functions in $q$ and
$q^{-s}$, where $q$ is the residue field cardinality of $\lri$, in
terms of Igusa functions. To recall the definition of the Lie ring
$L_{m,n}$, put
\begin{alignat*}{2}
  \bfE &= \bfE(m,n) = \{ \bfe \mid \bfe = (e_1,\dots,e_n) \in\N_0^n, & e_1 + \dots +
  e_n &= m-1\},\\ \bfF &= \bfF(m,n) = \{ \bff \mid \bff = (f_1,\dots,f_n) \in\N_0^n,
  &\; f_1 + \dots + f_n &= m\}.
\end{alignat*}
The Lie ring $L_{m,n}$ has generators
$$\{x_\bfe \mid \bfe \in \bfE \} \cup \{ y_\bff \mid \bff \in \bfF \} \cup \{ z_j \mid j\in[n]\},$$
subject to the defining relations
\begin{equation}\label{equ:def.rel}
  [x_\bfe,x_{\bfe'}] = [y_{\bff},y_{\bff'}] = [x_\bfe,z_j] =
  [y_\bff,z_j] = [z_j,z_{j'}]= 0
\end{equation}
for all $\bfe,\bfe'\in\bfE$, $\bff,\bff'\in \bfF$, $j,j' \in [n]$ and,
for all $\bfe\in\bfE$ and $\bff \in \bfF$,
\begin{equation}\label{def:relation}
  [x_\bfe,y_\bff] = \begin{cases} z_i & \textup{ if $\bff-\bfe$ is the $i$th
      standard basis vector of $\Z^n$,}\\ 0 & \textup{
      else.} \end{cases}
\end{equation}
Clearly $Z(L_{m,n}) = L_{m,n}':= [L_{m,n},L_{m,n}] = \la z_1,\dots,z_n
\ra$. In particular, $L_{m,n}$ is nilpotent of class~$2$. Setting
$$ e(m,n) = \# \bfE = \binom{n+m-2}{n-1}, \quad f(m,n) = \# \bfF = \binom{n+m-1}{n-1},$$
and further
 $$d(m,n) = f(m,n) + e(m,n), \quad h(m,n) = d(m,n) + n,$$ we find that
$$\rk_{\Z}(L_{m,n}/ L_{m,n}') = d(m,n),\quad \rk_{\Z}(L_{m,n}') =
\rk_{\Z}(Z(L_{m,n})) = n, \quad \rk_{\Z}(L_{m,n}) = h(m,n).$$
We recall further from \cite{SV1/15} the definition of the \emph{Igusa
  zeta function of degree $n$}
\begin{align}
  I_n(Y; \bfX) &=\sum_{I\subseteq \{1,\dots,n\}} \binom{n}{I}_{Y}
                 \prod_{i\in I}\frac{X_i}{1-X_i} 
               =\frac{1}{1-X_n} \sum_{I\subseteq \{1,\dots,n-1\}} \binom{n}{I}_{Y}
                 \prod_{i\in I}\frac{X_i}{1-X_i}\nonumber\\
               &= \frac{\sum_{w\in S_n}
                 Y^{\ell(w)} \prod_{j\in \Des(w)}
                 X_j}{\prod_{i=1}^n(1-X_i)}\;\in\Q(Y,\bfX),\label{def:igusa}
\end{align}
where $\ell$ denotes the classical Coxeter length function on the
Coxeter group $S_n$ and the descent statistic $\Des$ is defined via
$\Des(w) = \{i\in \{1,\dots,n-1\} \mid w(i+1)<w(i) \}$. Moreover, for
$I=\{i_1,\dots,i_l\}\subseteq\{1,\dots,n\}_<$, the associated
\emph{Gaussian multinomial} is the polynomial
\begin{equation*}\label{def:gaussian.multi} \binom{n}{I}_Y = \binom{n}{i_{l}}_Y
  \binom{i_{l}}{i_{l-1}}_Y \cdots \binom{i_2}{i_1}_Y \in \Z [Y],
\end{equation*}
defined in terms of the \emph{Gaussian binomials} \begin{equation*}\label{def:gauss}
  \binom{a}{b}_Y = \frac{\prod_{i=a-b+1}^a (1-Y^i)}{\prod_{i=1}^b
    (1-Y^i)}\in \Z[Y]
\end{equation*}
for $a,b\in\N_0$ with $a\geq b$. It is well known that the ideal zeta
function $\zeta_{\lri^d}(s) = \zeta^{\nl}_{\lri^d}(s)$ of the abelian
$\lri$-Lie algebra $\lri^d$, enumerating all $\lri$-submodules of
finite index in $\lri^d$, is given by
\begin{equation}\label{equ:abel}
  \zeta_{\lri^d}(s) = \frac{1}{\prod_{i=0}^{d-1}(1-q^{i-s})} =
  I_d(q^{-1};(q^{(d-i)(i-s)})_{i=d-1}^0);
\end{equation}
cf., for instance, the first entry in the table on p.~218 of
\cite{GSS/88} (with $m=1$ and $r=0$). The main result of this paper is
the following.

\begin{theorem}\label{thm:main}
  For any compact discrete valuation ring $\lri$, with residue field
  cardinality $q$,
  \begin{equation}\label{equ:equ.main}
    \zeta^{\nl}_{L_{m,n}(\lri)}(s) = \zeta_{\lri^{d(m,n)}}(s) \cdot
    I_n\left(q^{-1};\left( q^{a^{\nl}_i(m,n) -s\, b^{\nl}_i(m,n)}
    \right)_{i=n-1}^0\right),\end{equation} where, for
  $i\in\{0,1,\dots,n-1\}$,
\begin{align}
  a^{\nl}_i(m,n) &= (n-i)(i + d(m,n)),\nonumber \\
  b^{\nl}_i(m,n) &= n - i + e(m,n) + \sum_{j=i+1}^n
                   e(m,j).\label{num.data}
\end{align}
\end{theorem}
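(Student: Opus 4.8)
The plan is to reduce the enumeration of ideals to a sum over sublattices of the centre and to recognise that sum as an Igusa function. Write $L=L_{m,n}(\lri)$, $d=d(m,n)$, and fix identifications $L/L'\cong\lri^{d}$ (via the images of the $x_{\bfe}$ and $y_{\bff}$) and $L'=Z(L)\cong\lri^{n}$ (via the $z_{j}$). A finite-index ideal $I\nl L$ is determined by $M:=(I+L')/L'$, a finite-index sublattice of $\lri^{d}$, together with $N:=I\cap L'$, a finite-index sublattice of $\lri^{n}$, and a choice of splitting of $L/N\to L/L'$ over $M$. Since $L$ has class $2$, the bracket $[L,\cdot]$ factors through $L/L'$, so ``$I$ is an ideal'' amounts precisely to $[L,M]\subseteq N$; and, granting this, the ideals with given $(M,N)$ form a torsor under $\Hom_{\lri}(M,L'/N)$, hence number $|\lri^{n}:N|^{d}$ (as $M\cong\lri^{d}$ over the discrete valuation ring $\lri$). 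Combining this with $|L:I|=|\lri^{d}:M|\cdot|\lri^{n}:N|$, reversing the order of summation, and observing that $[L,M]\subseteq N$ is equivalent to $M\subseteq M^{*}(N):=\{v\in\lri^{d}:[L,v]\subseteq N\}$ — a finite-index sublattice of $\lri^{d}$, since $[L,\lri^{d}]=\lri^{n}$ — one obtains
$$
\zeta^{\nl}_{L}(s)=\sum_{N}|\lri^{n}:N|^{d-s}\sum_{M\subseteq M^{*}(N)}|\lri^{d}:M|^{-s}=\zeta_{\lri^{d}}(s)\sum_{N}|\lri^{n}:N|^{d-s}\,|\lri^{d}:M^{*}(N)|^{-s},
$$
the sums over $M,N$ being over finite-index sublattices, and the last equality because the finite-index sublattices of $M^{*}(N)\cong\lri^{d}$ contribute the factor $|\lri^{d}:M^{*}(N)|^{-s}\zeta_{\lri^{d}}(s)$.

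Next I would compute $|\lri^{d}:M^{*}(N)|$. The key point is that $\GL_{n}(\lri)$ acts on $L$ by Lie-ring automorphisms. Indeed, setting $V=\lri^{n}$, one checks that $L_{m,n}$ is isomorphic over $\Z$ to the class-$2$ Lie ring with abelianisation $\operatorname{Sym}^{m-1}(V^{*})\oplus\Gamma^{m}(V)$ — identifying $x_{\bfe}$ with the monomial basis of $\operatorname{Sym}^{m-1}(V^{*})$ and $y_{\bff}$ with the divided-power basis of $\Gamma^{m}(V)$ — with centre $V$ and bracket the canonical contraction $\operatorname{Sym}^{m-1}(V^{*})\otimes\Gamma^{m}(V)\to V$ dual to the multiplication $V^{*}\otimes\operatorname{Sym}^{m-1}(V^{*})\to\operatorname{Sym}^{m}(V^{*})$; using divided powers, rather than symmetric powers, is exactly what makes this map, with all structure constants equal to $1$, defined over $\Z$ — a naive $\operatorname{Sym}^{m-1}(V)\otimes\operatorname{Sym}^{m}(V)\to V$ does not even exist $\GL(V)$-equivariantly. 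Consequently $\GL_{n}(\lri)$ acts on $L$ as the standard representation on $L'=V\otimes_{\Z}\lri$, and $M^{*}(gN)=g\cdot M^{*}(N)$ under the induced $\GL_{d}(\lri)$-action on $\lri^{d}$; so $|\lri^{d}:M^{*}(N)|$ depends only on the cotype $\nu=(\nu_{1}\ge\cdots\ge\nu_{n}\ge0)$ of $N$ (i.e.\ $\lri^{n}/N\cong\bigoplus_{k}\lri/\pi^{\nu_{k}}$, $\pi$ a uniformiser). Taking $N=\bigoplus_{k}\pi^{\nu_{k}}\lri z_{k}$ and unwinding \eqref{def:relation}, the conditions cutting $M^{*}(N)$ out of $\lri^{d}$ read $a_{\bfe}\equiv0\pmod{\pi^{\nu_{1}}}$ for all $\bfe\in\bfE$ and $b_{\bff}\equiv0\pmod{\pi^{\nu_{i_{0}(\bff)}}}$ for all $\bff\in\bfF$, where $i_{0}(\bff)=\min\{i:f_{i}\ge1\}$; since $\#\{\bff\in\bfF:i_{0}(\bff)=k\}=e(m,n-k+1)$, this yields $|\lri^{d}:M^{*}(N)|=q^{\operatorname{ind}^{*}(\nu)}$ with
$$
\operatorname{ind}^{*}(\nu)=e(m,n)\,\nu_{1}+\sum_{k=1}^{n}e(m,n-k+1)\,\nu_{k}.
$$

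Substituting back, and writing $T_{1}=q^{\,d-s(1+2e(m,n))}$ and $T_{k}=q^{\,d-s(1+e(m,n-k+1))}$ for $2\le k\le n$, the remaining sum becomes $\sum_{\nu}c_{n}(\nu)\prod_{k=1}^{n}T_{k}^{\nu_{k}}$, where $c_{n}(\nu)$ is the number of finite-index sublattices of $\lri^{n}$ of cotype $\nu$. The final step is the standard identity expressing Igusa's function as this generating series — the multivariate refinement of \eqref{equ:abel}, proved by induction on $n$ via Hermite normal forms —
$$
\sum_{\nu_{1}\ge\cdots\ge\nu_{n}\ge0}c_{n}(\nu)\prod_{k=1}^{n}T_{k}^{\nu_{k}}=I_{n}\!\left(q^{-1};\left(q^{\,j(n-j)}\textstyle\prod_{k=1}^{j}T_{k}\right)_{j=1}^{n}\right),
$$
which reduces to \eqref{equ:abel} upon setting $T_{1}=\cdots=T_{n}=q^{-s}$. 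A routine computation of exponents — using $\sum_{k=2}^{j}e(m,n-k+1)=\sum_{l=i+1}^{n-1}e(m,l)$ with $i=n-j$, together with $j(n-j)+jd=a^{\nl}_{i}(m,n)$ — identifies $q^{\,j(n-j)}\prod_{k\le j}T_{k}$ with $q^{\,a^{\nl}_{i}(m,n)-s\,b^{\nl}_{i}(m,n)}$, so the argument tuple is $(q^{\,a^{\nl}_{i}(m,n)-s\,b^{\nl}_{i}(m,n)})_{i=n-1}^{0}$, which is \eqref{equ:equ.main}.

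The bulk of the work is the second paragraph: recognising the divided-power model of $L_{m,n}$ — which at once supplies a $\GL_{n}$-action valid in every residue characteristic and pins down $M^{*}(N)$ — and then carrying out the (still somewhat delicate) combinatorics identifying $M^{*}(N)$ and the counts $\#\{\bff\in\bfF:i_{0}(\bff)=k\}=e(m,n-k+1)$. The reduction in the first paragraph is routine class-$2$ bookkeeping, and the Igusa identity in the third is standard once conventions are aligned.
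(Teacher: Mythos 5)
Your proposal is correct, and it reaches Theorem~\ref{thm:main} by a genuinely different route at the crucial step. The outer frame agrees with the paper: your first paragraph is the reduction underlying \cite[Lemma~1]{Voll/05} (your $M^{*}(N)$ is, modulo $L'$, the lattice $X([\Lambda'])$ of \eqref{def:X}, and your identity for $\zeta^{\nl}_{L}$ is \eqref{equ:zeta.AJM}--\eqref{equ:Amn} with all lattices rather than homothety classes), and your third paragraph is the same repackaging of the lattice count \eqref{equ:f} into an Igusa function carried out in Section~\ref{subsec:completion}; I checked your exponent bookkeeping ($\#\{\bff : i_0(\bff)=k\}=e(m,n-k+1)$, the $T_k$, and the identification with $a^{\nl}_i,b^{\nl}_i$) and it reproduces \eqref{num.data} exactly, e.g.\ it recovers Example~\ref{exa:23}. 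Where you diverge is the computation of $|\lri^{d}:M^{*}(N)|$. The paper parametrizes maximal lattices by coset representatives with unit antidiagonal and proves, via an inductive elementary-divisor reduction of the concatenated matrix $\left(\pi^{r}\bfB_{m,n}(\alpha^{(1)})\mid\cdots\mid\bfB_{m,n}(\alpha^{(n)})\right)$ resting on the recursive block structure of the commutator matrix (Proposition~\ref{pro:coma}, Lemmas~\ref{lem:coma} and~\ref{lem:bigmatrix}, Proposition~\ref{lem:w'}), that this index depends only on the type and is the stated power of $q$. You instead exhibit the $\Z$-form $\operatorname{Sym}^{m-1}(V^{*})\oplus\Gamma^{m}(V)\oplus V$ (with $V=\Z^{n}$) whose contraction bracket does reproduce \eqref{def:relation} with structure constants $1$ and is $\GL(V)$-equivariant over $\Z$ precisely because of the divided powers; hence $\GL_n(\lri)$ acts by Lie-ring automorphisms restricting to the standard action on the centre, $|\lri^{d}:M^{*}(N)|$ depends only on the cotype of $N$, and the computation reduces to diagonal $N$, where it is read off \eqref{def:relation} in one line. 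Your route buys a conceptual, characteristic-free explanation of exactly the phenomenon the paper isolates — that $w'$ is a linear function of the elementary divisors alone — and bypasses the commutator matrix and the matrix reduction entirely; the paper's route is the generic machine requiring no special symmetry of the Lie ring, and its $A^{\nl}_{m,n}$-formalism feeds directly into the corollaries (the functional equation via \cite[Theorem~4]{Voll/05}, the graded variant of Theorem~\ref{thm:main.gr}). The two ingredients you quote without proof — the splitting/torsor reduction and the cotype generating-function identity — are standard and correctly stated (the latter is \eqref{equ:f} plus the manipulation of Section~\ref{subsec:completion}); in a written version you should spell out the equivariance of the contraction (it follows from functoriality of $\operatorname{Sym}$, $\Gamma$, the comultiplication $\Gamma^{m}(V)\to\Gamma^{m-1}(V)\otimes V$ and the duality $\Gamma^{m-1}(V)\cong(\operatorname{Sym}^{m-1}(V^{*}))^{*}$) rather than leaving it at ``one checks''.
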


\begin{example}\label{exa:23}
  For $(m,n)=(2,3)$ we obtain
  $$\zeta^{\nl}_{L_{2,3}(\lri)}(s) = \frac{1+q^{9-7s} +q^{10-7s}+
    q^{18-10s} +q^{19-10s} +
    q^{28-17s}}{\left(\prod_{i=0}^8(1-q^{i-s})\right)(1-q^{27-12s})(1-q^{20-10s})(1-q^{11-7s})}.$$
  This formula exemplifies what seems to be a general phenomenon:
  cancellations as observed in \eqref{equ:abel} do not appear to
  affect the Igusa function of degree $n$ occurring in
  \eqref{equ:equ.main}.
\end{example}

\subsection{Mal'cev's correspondence}
Nilpotent Lie rings play an important role in the theory of finitely
generated, torsion-free nilpotent groups. Indeed, by the Mal'cev
correspondence, each such group $G$ has an associated nilpotent Lie
ring $\textup{L}_G$ such that, for almost all rational primes~$p$, the
ideal zeta function $\zeta^{\nl}_{\textup{L}_G(\Zp)}(s)$ is equal to
the \emph{local normal subgroup zeta function}
$\zeta^{\nl}_{G,p}(s) := \sum_{H \nl_p G} |G:H|^{-s}$, enumerating the
normal subgroups of $G$ of finite $p$-power index; cf.\
\cite[Theorem~4.1]{GSS/88}. In nilpotency class two, one may turn
instead to the Lie ring $L_G := Z(G) \oplus G/(Z(G))$. It is not hard
to show that the identity
$\zeta^{\nl}_{L_G(\Zp)}(s) = \zeta^{\nl}_{G,p}(s)$ holds for
\emph{all} primes~$p$. Every class-$2$-nilpotent Lie ring arises in
this way. Theorem~\ref{thm:main} thus yields, as a corollary, all
local normal subgroup zeta functions of the nilpotent groups
$\Delta_{m,n}$ associated to the Lie rings
$L_{m,n} = L_{\Delta_{m,n}}$, and thus the (global) \emph{normal zeta
  function}
$$\zeta^{\nl}_{\Delta_{m,n}}(s) := \sum_{H \nl \Delta_{m,n}}|
\Delta_{m,n}:H|^{-s} = \prod_{p \textup{
    prime}}\zeta^{\nl}_{\Delta_{m,n},p}(s) = \prod_{p \textup{
    prime}}\zeta^{\nl}_{L_{m,n}(\Zp)}(s) = \zeta^{\nl}_{L_{m,n}}(s).$$
It was in this context of subgroup growth of finitely generated
nilpotent groups that ideal zeta functions of Lie rings were first
studied systematically.

We are not aware of any subgroup-growth-theoretic interpretation of
the ideal zeta functions $\zeta^{\nl}_{L_G(\lri)}(s)$ if $\lri$ is not
of characteristic zero with prime residue field, i.e.\ of the form
$\lri=\Zp$.

\subsection{Previous work}
Theorem~\ref{thm:main} was previously known in a number of extremal
cases.

\begin{example}\label{exa:heisenberg}
  For $n=1$, the Lie rings $L_{m,1}$ are all isomorphic to the
  \emph{Heisenberg Lie ring}
$$\mfh = \la x,y,z \mid [x,y]=z, [x,z]=[y,z]=0\ra_{\Z}.$$
  Theorem~\ref{thm:main} thus confirms and extends the well-known,
  prototypical formula
  \begin{equation}\label{eq:heisenberg}
    \zeta^{\nl}_{\mfh(\lri)}(s) =
    \frac{1}{(1-q^{-s})(1-q^{1-s})(1-q^{2-3s})};
  \end{equation}
    cf.\ \cite[Proposition~8.1]{GSS/88}. For $n>1$, however, the Lie
    rings $L_{m,n}$ are pairwise non-isomorphic. For practical
    purposes, we may thus restrict attention to $n\geq 2$ as in
    \cite{BermanKlopschOnn/18}.
\end{example}

\begin{example}\label{exa:indec}
  For $n=2$, the Lie rings $L_{m,2}$ are the Lie rings associated, by
  the Mal'cev correspondence, to the indecomposable
  $\mathfrak{D}^*$-groups of odd Hirsch length $2m+3$ featuring in the
  classification up to commensurability of the finitely generated
  class-$2$-nilpotent groups with $2$-dimensional centre, developed in
  \cite{GSegal/84}. Their local normal zeta functions, and thus the
  ideal zeta functions of the Lie rings $L_{m,2}(\Zp)$, are
  essentially computed in \cite[Proposition~2]{Voll/04}, in agreement
  with the relevant special cases of Theorem~\ref{thm:main}.
\end{example}

\begin{example}\label{exa:grenham}
  For $m=1$, the Lie rings $L_{1,n}$ are the Lie rings associated to
  the so-called \emph{Grenham groups} $G_{n+1}$. These
  class-$2$-nilpotent groups have presentations
\begin{equation*}\label{equ:grenham.pres}
  G_{n+1} = \la x,y_1,\dots,y_n, z_1,\dots,z_n \mid \forall
  i\in\{1,\dots,n\}:\;[x,y_i]=z_i, \textup{ $z_i$ central}\ra.
\end{equation*}
The normal zeta functions of these groups, and thus the ideal zeta
functions of the Lie rings $L_{1,n}(\Zp)$, are computed in
\cite[Theorem~5]{Voll/05}.
\end{example}

\subsection{Related work}
The paper \cite{BermanKlopschOnn/18}, which introduced the groups
$\Delta_{m,n}$, computes their \emph{pro\-isomorphic zeta functions}
$\zeta^{\wedge}_{\Delta_{m,n}}(s)$, enumerating the subgroups of
finite index in $\Delta_{m,n}$ whose profinite completions are
isomorphic to that of~$\Delta_{m,n}$. By general principles, these
zeta functions also satisfy Euler product decompositions indexed by
the rational primes, whose factors are rational functions in
$p^{-s}$. In the notation of the current paper,
\cite[Theorem~1.4]{BermanKlopschOnn/18} establishes that the Euler
factor $\zeta^{\wedge}_{\Delta_{m,n},p}(s)$ of
$\zeta^{\wedge}_{\Delta_{m,n}}(s)$ at a rational prime~$p$,
enumerating the relevant subgroups of $\Delta_{m,n}$ of $p$-power
index, is of the form
$$\zeta^{\wedge}_{\Delta_{m,n},p}(s) = I_1(p^{-1}; z_{n+1}) \cdot
I_n(p^{-1}; (z_i)_{i=1}^n)$$ for explicitly given ``numerical data''
$z_i = p^{a^{\wedge}_i(m,n)-s\ b_i^{\wedge}(m,n)}$, for integers
$a^{\wedge}_i(m,n)$, $b_i^{\wedge}(m,n)$, comparable to (but different
from) those given in~\eqref{num.data}.

The \emph{subgroup zeta functions} $\zeta_{\Delta_{m,n}}(s) = \sum_{H
  \leq \Delta_{m,n}} | \Delta_{m,n}:H|^{-s}$, enumerating \emph{all}
finite index subgroups of the groups $\Delta_{m,n}$, are known
explicitly only for $m=1$, i.e.\ the Grenham groups from
Example~\ref{exa:grenham}; cf.\ \cite{VollBLMS/06}.

For a nonzero prime ideal $\mfp$ in a number ring $\Gri$, the
$\Gri_{\mfp}$-ideal zeta functions
$\zeta^{\nl}_{L_{m,n}(\Gri_{\mfp})}(s)$ should not be confused with
the $\Zp$-ideal zeta functions of $L_{m,n}(\Gri_{\mfp})$, considered
as $\Zp$-Lie algebras. For $n=1$, i.e.\ in the case of the Heisenberg
Lie ring $\mfh \cong L_{m,1}$ (see Example~\ref{exa:heisenberg}), the
latter have been computed for primes $p$ which are unramified in
$\Gri$ in \cite{SV1/15} and for primes $p$ which are non-split in
$\Gri$ in \cite{SV2/16}. In the paper \cite{CSV/19} we generalize
these computations to cover the ideal zeta functions of algebras
arising from a large class of Lie rings, including the Grenham Lie
rings $L_{1,n}$, via base extensions with various compact discrete
valuation rings. The paper gives a survey of applications of Igusa
functions in the area of zeta functions of groups and rings, and is
built on a generalization of Igusa functions.

\subsection{Organization and notation}

We prove Theorem~\ref{thm:main} in Section~\ref{sec:main.thm}, using
the general method introduced in \cite{Voll/05}. In
Section~\ref{sec:cor.por} we collect a number of corollaries and
porisms, notably pertaining to global analytic properties of ideal
zeta functions, functional equations satisfied by local ideal zeta
functions, their behaviour at zero, topological and reduced ideal zeta
functions, graded ideal zeta functions, and the representation zeta
functions associated to the groups $\Delta_{m,n}$.

We write $\N= \{1,2,\dots\}$ and $X_0= X \cup \{0\}$ for a subset
$X\subseteq \N$. Given $n\in\N_0$, we write $[n]=\{1,2,\dots,n\}$. The
notation $I = \{i_1,\dots,i_\ell\}_<\subseteq \N_0$ indicates that
$i_1 < i_2 < \dots < i_\ell$.

We write $\Mat_{a,b}(R)$ for the set of $a\times b$ matrices over a
ring~$R$. The ring's units are denoted by $R^*$. We write
$\Mat_{a}(R)$ instead of $\Mat_{a,a}(R)$.  Given matrices
$A_1,\dots,A_n$ with the same number of rows, we write $\left( A_1
\mid \dots \mid A_n\right)$ for their juxtaposition (or
concatenation). We write $\Id_{n}$ for the $n\times n$-identity matrix
and $\bfz_{a,b}$ for the zero matrix
$(0)_{ij}\in\Mat_{a,b}(R)$. Sometimes we write $\bfz$ for a zero
matrix whose dimensions are clear from the context.

We denote by $\lri$ a compact discrete valuation ring of arbitrary
characteristic, with maximal ideal $\mfp$, uniformizer $\pi \in
\mfp\setminus \mfp^2$, and residue field cardinality $q$. The
$\mfp$-adic valuation on $\lri$ will be denoted by $\val_{\mfp}$.

Given a property $P$, the ``Kronecker delta'' $\delta_P$ is equal to
$1$ if $P$ holds and equal to $0$ otherwise.

\section{Proof of Theorem~\ref{thm:main}}\label{sec:main.thm}

We maintain, to a large extent, the notation of \cite{Voll/05}.
Throughout, $m,n\in\N$ with $n\geq 2$ are arbitrary but fixed.

\subsection{Commutator matrix}
A key object in the computation of various zeta functions associated
to the Lie ring $L_{m,n}$ is its \emph{commutator matrix} with respect
to a $\Z$-basis. Consider the ordered (!) $\Z$-basis
$$\mcB_{m,n} = (x_{\bfe}, y_{\bff}, z_1,\dots,z_n)_{\bfe \in \bfE, \,\bff \in \bfF}$$
of $L_{m,n}$, where both the elements $x_\bfe$ and $y_{\bff}$ are
given, respectively, in \emph{reverse lexicographical ordering}, viz.\
the ordering obtained from the usual lexicographical orderings on
$\bfE$ resp.\ $\bfF$ but read backwards; cf.\
Example~\ref{exa:examples}~\eqref{exa23}.

Recall that the commutator matrix $M_{m,n}$ of $L_{m,n}$ with respect
to $\mcB_{m,n}$ is given as follows. For $i,j\in[d(m,n)]$ and
$w_i,w_j\in\mcB_{m,n}$, write
$[w_i,w_j] = \sum_{k=1}^n \lambda_{ij}^k z_k$, for structure
constants~$\lambda_{ij}^k\in\Z$. Then set $\bfY=(Y_1,\dots,Y_n)$ and
$$M_{m,n}(\bfY) = \left( \sum_{k=1}^n \lambda_{ij}^kY_k \right)_{ij} \in \Mat_{d(m,n)}(\Z[\bfY]).$$

To give a general, explicit description of $M_{m,n}$ we introduce the
following notation. For $e\in\N$ and a variable $Y$, write
$$\bfS_e(Y) = Y \cdot \Id_e \in\Mat_e(\Z[Y])$$
for the generic $(e\times e)$-scalar matrix. Set
$\bfY'=(Y_2,\dots,Y_n)$. We set $\bfB_{m,1}(Y) = Y\in\Mat_1(\Z[Y])$
and define recursively, for $n\geq 2$,

\begin{multline}\label{equ:coma}
  \bfB_{m,n}(\bfY) = \left( \begin{matrix} \bfS_{e(1,n-1)}(Y_1)&&&& \\
      \bfB_{1,n-1}(\bfY') & \bfS_{e(2,n-1)}(Y_1) & &&\\
      &\bfB_{2,n-1}(\bfY') &\ddots&&\\
      &&\ddots&\bfS_{e(m-1,n-1)}(Y_1)&\\
      &&&\bfB_{m-1,n-1}(\bfY')&\bfS_{e(m,n-1)}(Y_1)\\
      &&&&\bfB_{m,n-1}(\bfY')
\end{matrix} \right).
\end{multline}
Note that $\bfB_{1,n}(\bfY) =
(Y_1,\dots,Y_n)^{\trans}\in\Mat_{n,1}(\Z[\bfY])$.  Moreover, one
checks easily that
$$\bfB_{m,n}(\bfY) \in\Mat_{f(m,n),e(m,n)}(\Z[\bfY]),$$ say by using
parts~(1) and~(2) of the following lemma.

\begin{lemma}\label{lem:aux}\
\begin{enumerate}
 \item $\sum_{j=1}^m e(j,n-1) = e(m,n)$,
 \item $e(m,n) + f(m,n-1) = f(m,n)$,
 \item $\sum_{j=1}^ne(m,j) = f(m,n)$
\end{enumerate}
\end{lemma}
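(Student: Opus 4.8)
The plan is to prove all three identities directly from the definitions $e(m,n)=\binom{n+m-2}{n-1}$ and $f(m,n)=\binom{n+m-1}{n-1}$, using only elementary binomial coefficient identities; no induction on the Lie-ring structure is needed, since these are purely combinatorial statements about the cardinalities of the index sets $\bfE(m,n)$ and $\bfF(m,n)$. It is worth recording at the outset the ``combinatorial'' reading: $e(m,n)$ counts compositions of $m-1$ into $n$ nonnegative parts, and $f(m,n)$ counts compositions of $m$ into $n$ nonnegative parts, so each identity admits a bijective proof as well; I would mention this but carry out the computation via binomials.

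For part~(1), I would write $\sum_{j=1}^m e(j,n-1) = \sum_{j=1}^m \binom{j+n-3}{n-2}$ and apply the hockey-stick identity $\sum_{k=r}^{N}\binom{k}{r} = \binom{N+1}{r+1}$: reindexing $k=j+n-3$, the sum runs over $k$ from $n-2$ to $m+n-3$, giving $\binom{m+n-2}{n-1} = e(m,n)$. Alternatively, and perhaps more transparently, this is a telescoping consequence of part~(2) after replacing $n$ by $n-1$ throughout, i.e.\ of $e(j,n-1)=f(j,n-1)-f(j-1,n-1)$ together with the boundary value $f(0,n-1)=\binom{n-2}{n-2}=1$ — but note the telescoped sum then gives $f(m,n-1)-f(0,n-1)+1$; one checks $e(m,n)=f(m,n)-f(m,n-1)$ does \emph{not} immediately match, so the cleanest route really is the direct hockey-stick computation, and I would present that. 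For part~(2), the identity $e(m,n)+f(m,n-1)=f(m,n)$ reads $\binom{n+m-2}{n-1}+\binom{n+m-2}{n-2}=\binom{n+m-1}{n-1}$, which is exactly Pascal's rule. For part~(3), $\sum_{j=1}^n e(m,j) = \sum_{j=1}^n \binom{m+j-2}{j-1}$; reindexing $k=j-1$ this is $\sum_{k=0}^{n-1}\binom{m-1+k}{k}=\sum_{k=0}^{n-1}\binom{m-1+k}{m-1}$, and the hockey-stick identity (in its ``upper index'' form $\sum_{k=0}^{N}\binom{r+k}{k}=\binom{r+N+1}{N}$) yields $\binom{m+n-1}{n-1}=f(m,n)$.

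There is no real obstacle here: the only thing to be careful about is bookkeeping of the index shifts and the degenerate cases ($m=1$, or small $n$), where some binomials have top index less than bottom index and must be read as~$0$ — in particular $e(1,n)=\binom{n-1}{n-1}=1$ corresponds to the empty composition, and one should check the recursion $\bfB_{1,n}(\bfY)=(Y_1,\dots,Y_n)^{\trans}$ is consistent with $f(1,n)=n$, $e(1,n)=1$. I would state the three identities' proofs in the order (2), then (1), then (3), since (2) is the one-line Pascal identity and the other two are hockey-stick sums; each is a two-line computation, so the whole lemma occupies a short paragraph.
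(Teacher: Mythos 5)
Your computations are correct and coincide with the paper's own (one-line) proof, which simply declares the identities trivial and points to parallel summation---exactly your hockey-stick sums for (1) and (3), with Pascal's rule handling (2). The telescoping aside you consider for (1) rests on an identity that is false in general ($e(j,n-1)\neq f(j,n-1)-f(j-1,n-1)$), but since you yourself flag the mismatch and discard that route in favour of the direct computation, the proof as presented stands.
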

\begin{proof}
  Trivial. (One may want to use parallel summation for (1) and (3);
  cf.\ \cite[p.~174]{GKP/94}.)
\end{proof}

Various examples of $\bfB_{m,n}$ are given in
Example~\ref{exa:examples}. The following is evident.

\begin{lemma}\label{lem:coma}
If $\bfy = (y_1,\dots,y_n)\in\Fq^n \setminus \{ \mathbf{0} \}$,
then $\bfB_{m,n}(\bfy)$ has full rank~$e(m,n)$.
\end{lemma}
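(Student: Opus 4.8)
The plan is to prove Lemma~\ref{lem:coma} by induction on $n\geq 1$, exploiting the recursive block structure of $\bfB_{m,n}$ in~\eqref{equ:coma} together with the dimension count from Lemma~\ref{lem:aux}. For the base case $n=1$, the matrix $\bfB_{m,1}(y_1)=y_1\in\Mat_1(\Z[Y])$ has full rank $e(m,1)=1$ precisely when $y_1\neq 0$, which is the hypothesis. (Alternatively one may take $n=2$ as the base case, where $\bfB_{1,2}(\bfY)=(Y_1,Y_2)^{\trans}$ and the recursion for general $m$ is easy to inspect directly.)

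For the inductive step, fix $\bfy=(y_1,\dots,y_n)\in\Fq^n\setminus\{\mathbf 0\}$ and write $\bfy'=(y_2,\dots,y_n)$. I would distinguish two cases. If $y_1\neq 0$, then the $m$ diagonal blocks $\bfS_{e(j,n-1)}(y_1)=y_1\cdot\Id_{e(j,n-1)}$ appearing in~\eqref{equ:coma} are invertible; since these blocks occupy all $e(m,n)=\sum_{j=1}^m e(j,n-1)$ columns (Lemma~\ref{lem:aux}(1)) and sit in distinct, non-overlapping rows, the submatrix of $\bfB_{m,n}(\bfy)$ on those rows is block-diagonal with invertible blocks, hence has rank $e(m,n)$; therefore $\bfB_{m,n}(\bfy)$ has full column rank. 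If $y_1=0$, then $\bfy'\neq\mathbf 0$, and the matrix collapses to its strictly-below-diagonal blocks: column-block $j$ (for $j\in[m]$) contains only the entries of $\bfB_{j,n-1}(\bfy')$, and these $m$ subdiagonal blocks occupy pairwise-disjoint row ranges (the rows of $\bfB_{j,n-1}$ for $j=1,\dots,m$, followed by those of $\bfB_{m,n-1}$, stacked consecutively). Hence $\bfB_{m,n}(\bfy)$ is, after deleting the zero rows and reordering, block-diagonal with blocks $\bfB_{1,n-1}(\bfy'),\dots,\bfB_{m,n-1}(\bfy')$. By the inductive hypothesis each $\bfB_{j,n-1}(\bfy')$ has full column rank $e(j,n-1)$, so the rank of $\bfB_{m,n}(\bfy)$ is $\sum_{j=1}^m e(j,n-1)=e(m,n)$ again by Lemma~\ref{lem:aux}(1). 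This settles both cases.

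The only genuinely delicate point — and the step I would be most careful about — is bookkeeping the row indices: one must check that in~\eqref{equ:coma} the scalar block $\bfS_{e(j,n-1)}(Y_1)$ in column-block $j$ and the off-diagonal block $\bfB_{j,n-1}(\bfY')$ in the \emph{same} column-block genuinely lie in different rows, and that across the $m+1$ row-strips the blocks $\bfB_{j,n-1}(\bfY')$ appear in disjoint consecutive positions. This is exactly what makes the two case-submatrices block-diagonal (up to permutation) rather than merely block-triangular, and it is forced by the shape of~\eqref{equ:coma}: the $j$th column-strip has width $e(j,n-1)$ and its nonzero entries occupy the rows of the $(j-1)$st subdiagonal block $\bfB_{j,n-1}$ together with the rows of $\bfS_{e(j,n-1)}(Y_1)$ sitting one strip higher, and these never coincide. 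Once this is observed, everything reduces to the three elementary identities already recorded in Lemma~\ref{lem:aux}, so no further computation is needed.
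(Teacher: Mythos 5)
Your proposal is correct in substance, and note that the paper itself offers no argument here at all: the lemma is introduced with ``The following is evident'', so your induction on $n$ (for all $m$ simultaneously) is a perfectly reasonable way of making the evident precise, not a deviation from a written proof. One description in your $y_1\neq 0$ case is inaccurate, though, and you should repair the wording: the square submatrix of $\bfB_{m,n}(\bfy)$ on the rows occupied by the scalar blocks (row strips $1,\dots,m$) is \emph{not} block-diagonal, because for $j\leq m-1$ the block $\bfB_{j,n-1}(\bfy')$ sits in row strip $j+1$, which is one of the strips you are keeping; ``sitting in distinct, non-overlapping rows'' is true of the scalar blocks but does not by itself exclude other blocks from those rows. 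What you actually obtain is a block lower triangular (indeed block bidiagonal) square matrix of size $e(m,n)$ with diagonal blocks $y_1\Id_{e(j,n-1)}$; since these are invertible its determinant is $y_1^{e(m,n)}\neq 0$, so the conclusion of full column rank stands and the fix costs one word. Your $y_1=0$ case is fine: the scalar blocks vanish, the blocks $\bfB_{j,n-1}(\bfy')$ occupy pairwise disjoint row strips $j+1$ and disjoint column strips $j$, so after deleting the zero top strip the matrix is genuinely block-diagonal, and the inductive hypothesis together with Lemma~\ref{lem:aux}(1) gives rank $\sum_{j=1}^m e(j,n-1)=e(m,n)$. If you want a proof avoiding induction altogether, observe that the $(\bff,\bfe)$ entry of $\bfB_{m,n}(\bfy)$ is $y_i$ when $\bff-\bfe=\varepsilon_i$ and $0$ otherwise; choosing the least $i$ with $y_i\neq 0$ and restricting to the rows indexed by $\{\bfe+\varepsilon_i \mid \bfe\in\bfE\}$ yields a square matrix that is triangular with respect to the lexicographic ordering with all diagonal entries equal to $y_i$, which proves the lemma directly.
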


\begin{proposition}\label{pro:coma}
  The commutator matrix of $L_{m,n}$ with respect to the $\Z$-basis
  $\mcB_{m,n}$ is
$$M_{m,n}(\bfY) = \left( \begin{matrix}  & -\bfB_{m,n}(\bfY)^{\trans} \\ \bfB_{m,n}(\bfY) &  \end{matrix} \right)\in \Mat_{d(m,n)}(\Z[\bfY]).$$
\end{proposition}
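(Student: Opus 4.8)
The plan is to verify directly, from the defining relations \eqref{equ:def.rel} and \eqref{def:relation}, that the matrix of structure constants has the claimed block-antisymmetric shape, and then to identify the off-diagonal block with $\bfB_{m,n}(\bfY)$ by induction on $n$. First I would observe that, since $L_{m,n}$ is class-$2$-nilpotent with $L_{m,n}'=\langle z_1,\dots,z_n\rangle$, only the generators $x_\bfe$ and $y_\bff$ (indexed by $[d(m,n)]$ in the ordered basis $\mcB_{m,n}$) contribute to $M_{m,n}(\bfY)$; the $z_j$ lie in the centre and play no role here. By \eqref{equ:def.rel} we have $[x_\bfe,x_{\bfe'}]=0$ and $[y_\bff,y_{\bff'}]=0$, so the two diagonal blocks of $M_{m,n}(\bfY)$ — the $x$-$x$ block of size $e(m,n)$ and the $y$-$y$ block of size $f(m,n)$ — vanish. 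Antisymmetry of the commutator gives the $y$-$x$ block as the negative transpose of the $x$-$y$ block, so it remains only to compute the $x$-$y$ block, i.e.\ the matrix whose $(\bff,\bfe)$-entry records $[x_\bfe,y_\bff]$ (with rows indexed by $\bff\in\bfF$, columns by $\bfe\in\bfE$, both in reverse lexicographical order). Call this matrix $C_{m,n}(\bfY)$; by \eqref{def:relation} its $(\bff,\bfe)$-entry is $Y_i$ if $\bff-\bfe$ is the $i$th standard basis vector and $0$ otherwise. The task reduces to showing $C_{m,n}(\bfY)=\bfB_{m,n}(\bfY)$.

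For the latter I would argue by induction on $n$, matching the recursive block structure \eqref{equ:coma}. The base case $n=1$: $\bfE(m,1)=\{(m-1)\}$, $\bfF(m,1)=\{(m)\}$, and $[x_{(m-1)},y_{(m)}]=z_1$ since $(m)-(m-1)=(1)$ is the first standard basis vector, so $C_{m,1}(Y)=(Y)=\bfB_{m,1}(Y)$. For the inductive step I would split both index sets by the value of the last coordinate: writing $\bfe=(\bfe'',e_n)$ with $\bfe''\in\N_0^{n-1}$ and $e_n\in\{0,\dots,m-1\}$, the block with $e_n$ fixed is in bijection with $\bfE(m-e_n,n-1)$, and similarly $\bff=(\bff'',f_n)$ with the $f_n$-block in bijection with $\bfF(m-f_n,n-1)$. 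The reverse-lexicographic order on $\bfE(m,n)$ groups these blocks by $e_n=0,1,\dots$ and within each block orders by reverse-lex on the first $n-1$ coordinates; likewise for $\bfF$. Now the entry $[x_\bfe,y_\bff]$ is nonzero only when $\bff-\bfe$ is a standard basis vector: either $\bff-\bfe=\bfu_i$ with $i<n$ (forcing $f_n=e_n$, and then $\bff''-\bfe''=\bfu_i$ in $\Z^{n-1}$, contributing $Y_i$), or $\bff-\bfe=\bfu_n$ (forcing $f_n=e_n+1$ and $\bff''=\bfe''$, contributing $Y_n$). The first case populates, for each $j=e_n+1$, the diagonal block coupling the $f_n=j$ rows with the $e_n=j$ columns; since within that block $\bff''$ ranges over $\bfF(m-j,n-1)$ and $\bfe''$ over $\bfE(m-j,n-1)$ with $\bff''-\bfe''=\bfu_i$ $(i<n$, but the variables are $Y_2,\dots,Y_n$ after shifting indices), this block is exactly $\bfB_{m-j,n-1}(\bfY')$ — wait, one must be careful: in \eqref{equ:coma} the scalar blocks carry $Y_1$ and the sub-blocks $\bfB_{\bullet,n-1}$ carry $\bfY'=(Y_2,\dots,Y_n)$, which corresponds to singling out the \emph{first} coordinate rather than the last. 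So I would instead split off the \emph{first} coordinate $e_1$ resp.\ $f_1$; with the reverse-lexicographic order, which reads the usual lex order backwards, blocks grouped by the first coordinate appear in increasing order of that coordinate's value, and within a block the remaining coordinates are again in reverse-lex order on $\N_0^{n-1}$. The case $\bff-\bfe=\bfu_1$ (i.e.\ $f_1=e_1+1$, $\bff''=\bfe''$ on coordinates $2,\dots,n$) contributes $Y_1$ and, because $\bfE(m-e_1,n-1)$ and $\bfF(m-e_1-1,n-1)$ have the same cardinality by Lemma~\ref{lem:aux}(2) applied with the roles of $e,f$ as in that identity, yields the scalar blocks $\bfS_{e(j,n-1)}(Y_1)$ positioned just above the diagonal in block form — matching the super-diagonal of \eqref{equ:coma}. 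Hmm: in \eqref{equ:coma} the scalars $\bfS_{e(j,n-1)}(Y_1)$ sit \emph{on} the block-diagonal and the $\bfB_{j,n-1}(\bfY')$ sit just \emph{below}; so it is the case $\bff-\bfe=\bfu_1$ that must land on the diagonal, meaning $f_1=e_1$ is impossible — rather $\bff''-\bfe''=\mathbf 0$ forces $\bff=\bfe$, contradiction, so $Y_1$ actually comes from $f_1=e_1+1$. The consistent reading is: index the column-blocks of $\bfB_{m,n}$ by $e_1=0,1,\dots,m-1$ (sizes $e(1,n-1),\dots,e(m,n-1)$ via the bijection $\bfe\leftrightarrow(e_1,\bfe'')$, $\bfe''\in\bfE(m-e_1,n-1)=\bfE(\,\cdot\,,n-1)$ — note $e(k,n-1)=\#\bfE(k,n-1)$ with the first index shifted) and the row-blocks by $f_1=0,1,\dots,m$ (sizes $f(0,n-1),\dots$, but $f(0,n-1)=0$... ); here $Y_1$-entries ($f_1=e_1+1$, $\bff''=\bfe''$) give scalar blocks on the main block-diagonal and below-shifted, while $Y_i$-entries for $i\ge2$ ($f_1=e_1$, $\bff''-\bfe''=\bfu_{i}$ in the last $n-1$ coordinates, variable $Y_i$) give the blocks $\bfB_{m-e_1,n-1}(\bfY')$ positioned on the sub-diagonal. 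Tracking the block sizes via Lemma~\ref{lem:aux}(1)--(2) to confirm they align with \eqref{equ:coma} is exactly the bookkeeping step.

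The main obstacle is precisely this bookkeeping: pinning down how the reverse-lexicographic order on $\bfE(m,n)$ and $\bfF(m,n)$ refines into the block decomposition of \eqref{equ:coma}, and checking that the block dimensions agree — which is where parts~(1) and~(2) of Lemma~\ref{lem:aux} enter — and that the $Y_1$-entries versus $Y_i$-entries ($i\ge2$) land in the scalar-block positions versus the recursive-block positions as displayed. Once the indexing is set up correctly this is a direct, if slightly intricate, comparison of entries; no deeper idea is needed. For clarity I would illustrate the whole argument on the case $(m,n)=(2,3)$ (Example~\ref{exa:examples}), where the block pattern of \eqref{equ:coma} can be exhibited explicitly and checked by hand against \eqref{def:relation}.
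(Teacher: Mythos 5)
Your overall strategy is the same as the paper's: the diagonal blocks vanish by \eqref{equ:def.rel}, antisymmetry reduces everything to the single off-diagonal block, and that block is identified with $\bfB_{m,n}(\bfY)$ by decomposing $\bfE(m,n)$ and $\bfF(m,n)$ according to the first coordinate of the index vectors (the paper phrases this as a block-by-block comparison rather than an explicit induction on $n$, but the content is identical). The difficulty is that the bookkeeping step, which you yourself single out as the main obstacle, is carried out incorrectly as written. In the reverse-lexicographic order --- usual lex read backwards, see the ordering of $\mcB_{2,3}$ in Example~\ref{exa:examples}~\eqref{exa23}, where the $x$-part is $(1,0,0),(0,1,0),(0,0,1)$ --- the blocks appear in \emph{decreasing} order of the first coordinate, not increasing as you assert: the $j$-th column block ($j=1,\dots,m$) consists of the $\bfe=(m-j,\bfe')$ with $\bfe'\in\bfE(j,n-1)$, and the $i$-th row block ($i=1,\dots,m+1$) of the $\bff=(m-i+1,\bff')$ with $\bff'\in\bfF(i-1,n-1)$. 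With this indexing, $\bff-\bfe$ equal to the first standard basis vector forces $i=j$ and yields $\bfS_{e(j,n-1)}(Y_1)$ on the block diagonal, while $f_1=e_1$ forces $i=j+1$ and yields $\bfB_{j,n-1}(\bfY')$ immediately below it; the cases $i<j$ and $i>j+1$ give zero blocks. This is exactly \eqref{equ:coma} and is how the paper argues.

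Under your increasing-$e_1$ convention, by contrast, the $Y_1$ scalar blocks would land on the block sub-diagonal and the recursive blocks $\bfB_{m-e_1,n-1}(\bfY')$ on the diagonal in reversed order, which is not the matrix \eqref{equ:coma}; your concluding summary (scalars on the diagonal, recursion just below) is the correct statement but does not follow from the setup you describe, and the accompanying size claims are inconsistent with your own bijection (for $e_1=0$ you have $\bfe''\in\bfE(m,n-1)$, so that block has size $e(m,n-1)$, not $e(1,n-1)$; also $f(0,n-1)=1$, not $0$). So the proposal, taken literally, would not verify the proposition; the fix is simply to index the column blocks by $\bfe=(m-j,\bfe')$ and the row blocks by $\bff=(m-i+1,\bff')$ as above, after which your induction on $n$ (with the variables relabelled to $\bfY'$ in the sub-blocks, and the within-block order again reverse-lex, as you note) goes through and coincides with the paper's proof.
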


\begin{proof}
  Given the defining relations \eqref{equ:def.rel} it is clear that
  the $(i,j)$-entry of $M_{m,n}(\bfY)$ vanishes if $i$ and $j$ are
  either both at most or both greater than~$e(m,n)$. The antisymmetry
  of $M_{m,n}(\bfY)$ is also evident, as $L_{m,n}$ is a Lie ring. To
  justify the specific shape of $\bfB_{m,n}(\bfY)$, recall that its
  columns are indexed by the generators $x_{\bfe}$, $\bfe \in \bfE$,
  whereas its rows are indexed by the generators $y_{\bff}$, $\bff \in
  \bfF$, of $L_{m,n}$. By definition of the commutator matrix
  $M_{m,n}(\bfY)$, the entry in position $(x_{\bfe},y_{\bff})$ is $Y_i
  \,\delta_{[x_\bfe,y_\bff]=z_i}$ for all $i\in[n]$;
  cf.\ \eqref{def:relation}.

  Crucially, both sets of generators are ordered
  reverse-lexicographically. Therefore, for $j=1,\dots,m$, the
  ``$j$-th column block'' $B_{m,n}^{(j)}(\bfY)$ of~$B_{m,n}(\bfY)$,
  comprising columns numbered
  $$\sum_{s< j}e(s,n-1)+1,\dots,\sum_{s\leq j}e(s,n-1),$$ echos the
  relations involving generators $x_\bfe$ indexed by elements
  $\bfe\in\N_0^n$ with first coordinate $m-j$, i.e.\ of the form
  $$\bfe = (m-j,\bfe') \textup{ for some } \bfe'\in \bfE(j,n-1).$$
  Likewise, for $i=1,\dots,m+1$, the ``$i$-th row block'' of
  $B_{m,n}(\bfY)$, comprising rows numbered
$$\sum_{r<i}e(r,n-1)+1,\dots,\sum_{r \leq i}e(r,n-1),$$ echos the
  relations involving generators $y_{\bff}$ indexed by elements
  $\bff\in\N_0^n$ with first coordinate $m-i+1$, i.e.\ of the form
  $$\bff = (m-i+1,\bff') \textup{ for some } \bff'\in \bfF(i-1,n-1).$$

  We describe in detail the submatrices of the column block
  $B_{m,n}^{(j)}(\bfY)$ defined by its intersection with the $i$-th
  row blocks of $B_{m,n}(\bfY)$, thereby justifying the claim that
\begin{equation*}\label{equ:col.block}
  B_{m,n}^{(j)}(\bfY) = \left( \begin{matrix} \bfz_{e(j-1,n),e(j,n-1)}
    \\ \bfS_{e(j,n-1)}(Y_1)\\ \bfB_{j,n-1}(\bfY')
    \\ \bfz_{(f(m,n)-f(j,n)),e(j,n-1)} \end{matrix}\right).
\end{equation*}
\begin{enumerate}
 \item If $i < j$, then the relevant rows of $B^{(j)}_{m,n}(\bfY)$
   comprise the relations between generators indexed by elements of
   the form
    \begin{alignat*}{1}
      \bfe &= (m-j, \bfe'),\\ 
      \bff &= (m-i+1, \bff').
    \end{alignat*}
  However, $(m-i+1) - (m-j) = j-i+1 \geq 2$, so
  $[x_{\bfe},y_{\bff}]=0$ for the relevant elements, and hence the
  relevant submatrix of $B^{(j)}_{m,n}(\bfY)$ is
  $\bfz_{e(j-1,n),e(j,n-1)}$.
 \item If $i=j$, then the relevant rows of $B^{(j)}_{m,n}(\bfY)$
   comprise the relations between generators indexed by elements of
   the form
    \begin{alignat*}{1}
      \bfe &= (m-j, \bfe'),\\
      \bff &= (m-j+1, \bff').
    \end{alignat*}
   As $[x_{\bfe},y_{\bff}]=z_1\delta_{\bfe'=\bff'}$, the relevant
   submatrix of $B^{(j)}_{m,n}(\bfY)$ is $\bfS_{e(j,n-1)}(Y_1)$.
 \item If $i = j+1$, then the relevant rows of $B^{(j)}_{m,n}(\bfY)$
   comprise the relations between generators indexed by elements of
   the form
    \begin{alignat*}{2}
      \bfe &= (m-j, \bfe')& \textup{ for some }\bfe'\in \bfE(j,n-1),\\ \bff &=
      (m-j, \bff'),&  \textup{ for some }\bff'\in \bfF(j,n-1).
    \end{alignat*}
    This justifies the claim that the relevant submatrix of
    $B^{(j)}_{m,n}(\bfY)$ is $B_{j,n-1}(\bfY')$.
\item If $i>j+1$, then  the relevant rows of $B^{(j)}_{m,n}(\bfY)$
   comprise the relations between generators indexed by elements of
   the form
    \begin{alignat*}{1}
      \bfe &= (m-j, \bfe'),\\
      \bff &= (m-i+1,\bff').
    \end{alignat*}
However, $(m - i + 1) - (m-j) = j-i+1 < 0$, so $[x_{\bfe},y_{\bff}]=0$
for the relevant elements, and hence the relevant submatrix of
$B^{(j)}_{m,n}(\bfY)$ is $\bfz_{(f(m,n)-f(j,n)),e(j,n-1)}$.\qedhere
%
\end{enumerate}
\end{proof}

\begin{example}\label{exa:examples}\
  \be
\item For $n=2$, we obtain
$$\bfB_{m,2}(Y_1,Y_2) = \left( \begin{matrix} Y_1 &&& \\Y_2 & Y_1 &&
    \\ & Y_2 & \ddots && \\ && \ddots& Y_1 \\ &&&Y_2 \end{matrix}
\right) \in\Mat_{m+1,m}(\Z[Y_1,Y_2]).$$ Up to a simple reordering of
rows and columns and swapping the variables $Y_1$ and~$Y_2$, the
matrix $M_{m,2}(Y_1,Y_2)$ is the commutator matrix described in
\cite[Theorem~4]{Voll/04} (essentially \cite[Theorem~6.3]{GSegal/84})
associated to the {indecomposable $\mathfrak{D}^*$-group}
$\Delta_{m,2}$; cf.\ Example~\ref{exa:indec}.
\item For $m=1$, we obtain 
$$M_{1,n}(Y_1,\dots,Y_n) = \left( \begin{matrix} &-Y_1&-Y_2&
    \dots& -Y_n \\Y_1&&&&\\Y_2 &&&& \\ \vdots &&& \\
    Y_n&&& \end{matrix} \right) \in\Mat_{n+1}(\Z[Y_1,\dots,Y_n]),$$
the commutator matrix of the {Grenham Lie ring} $L_{1,n}$ with respect
to the $\Z$-basis $(x,y_1,\dots,y_n, z_1,\dots,z_n)$; cf.\
Example~\ref{exa:grenham}.
\item \label{exa23}  For $m=2$, $n=3$,
  $$\mcB_{2,3} = \left\{ e_{(1,0,0)}, e_{(0,1,0)},e_{(0,0,1)},
    \,
    f_{(2,0,0)},f_{(1,1,0)},f_{(1,0,1)},f_{(0,2,0)},f_{(0,1,1)},f_{(0,0,2)},\,
    z_1,z_2,z_3 \right\},$$ yielding
  $$\bfB_{2,3}(Y_1,Y_2,Y_3) = \left( \begin{array}{c|cc} Y_1 && \\ \hline Y_2 & Y_1 & \\ Y_3& &Y_1  \\ \hline  &Y_2& \\ &Y_3&Y_2\\ &&Y_3 \end{array} \right).$$ 
\ee
\end{example}

\subsection{Informal overview of the proof}
We use the general method introduced in \cite{Voll/05}. The fact that
there only the case $\lri=\Zp$ is treated explicitly is
inconsequential: all that is needed is the fact that $\Zp$ is a
compact discrete valuation ring.

According to \cite[Lemma~1]{Voll/05} (essentially
\cite[Lemma~6.1]{GSS/88}) there exists a rational function
$A^{\nl}_{m,n}$ in $q$ and $q^{-s}$ such that
 \begin{equation*}\label{equ:zeta.AJM}
   \zeta^{\nl}_{L_{m,n}(\lri)}(s) = \zeta_{\lri^{d(m,n)}}(s)
   \frac{1}{1-q^{d(m,n) n-s\, h(m,n)}}A^{\nl}_{m,n}(q,q^{-s});
\end{equation*}
cf.\ Remark~\ref{rem:gafa} below. The function $A^{\nl}_{m,n}$ may be
viewed as a generating function enumerating the values of two
integer-valued functions $w$ and $w'$ on the set $\mcV_n$ of homothety
classes of lattices in $Z(L_{m,n}(\lri)) \cong \lri^n$, viz.\ vertices
in the affine Bruhat-Tits building of type $\widetilde{A_{n-1}}$
associated to the group $\GL_n(k)$, where $k = \textup{Frac}(\lri)$ is
the field of fractions of the local ring~$\lri$:
\begin{equation}\label{equ:Amn}
 A^{\nl}_{m,n}(q,q^{-s}) = \sum_{[\Lambda'] \in \mcV_n} q^{d(m,n)
   w([\Lambda'])-s \,w'([\Lambda'])}.
\end{equation}
The function $w$ captures the ($\log_q$ of the) {index} of the maximal
integral element $\Lambda'_{\max}$ of $[\Lambda']$ in
$Z(L_{m,n}(\lri))$. It is a simple function of the {elementary
  divisors} of $\Lambda'_{\max}$ relative to $\lri^n$; cf.\
\eqref{equ:w}. The function $w'$ records, in addition, the index in
$L_{m,n}(\lri)$ of the lattice $X([\Lambda'])$ in $L_{m,n}(\lri)$,
defined by the condition
\begin{equation}\label{def:X}
  X([\Lambda'])/\Lambda'_{\max} = Z(L_{m,n}(\lri)/\Lambda'_{\max});
\end{equation}
  see~\eqref{def:w'}.  We will use the interpretation of this index as
  the index of the kernel of a system of linear congruences on
  $\lri^{d(m,n)}$ provided by~\cite[Theorem~6]{Voll/05}; see
  Proposition~\ref{lem:w'}.

The discussion so far applies, \emph{mutatis mutandis}, to (the
$\lri$-points of) any class-$2$-nilpotent Lie ring. In general, the
index of $X([\Lambda'])$ in the Lie ring's abelianization will depend
in an arithmetically subtle way on $[\Lambda']$. The key to the proof
of Theorem~\ref{thm:main} is the realization that, for the Lie rings
$L_{m,n}$, the index of $X([\Lambda'])$ depends solely, and in a
($\log_q$-)linear fashion, on the elementary divisors of
$\Lambda'_{\max}$. Consequently, the rational function $A^{\nl}_{m,n}$
may be expressed in terms of an Igusa function of degree $n$. In the
course of the proof of these facts we will compute the relevant
($\log_q$-)linear functionals explicitly, making heavy use of the
combinatorial description~\eqref{equ:coma} of the commutator
matrix~$M_{m,n}(\bfY)$.

\begin{remark}\label{rem:gafa}
  \cite[Lemma~1]{Voll/05} does not hold in the generality proclaimed
  in \cite{Voll/05}. To apply for a prime $p$, the centre of the
  reduction of $G_p$ modulo $p$ needs to coincide with the reduction
  modulo $p$ of the centre of~$G_p$. This condition is satisfied
  generically, but may fail for finitely many primes. (The main
  results of \cite{Voll/05} are unaffected by this, as they are only
  stated (and known to hold only) for almost all primes $p$.)  In the
  case of the groups $\Delta_{m,n}$, this set of exceptional primes is
  indeed empty, as one checks without difficulty, so the conclusion of
  \cite[Lemma~1]{Voll/05} applies for all primes~$p$. I am grateful to
  the referee for pointing out these facts.
\end{remark}

\subsection{Parametrizing lattices}\label{subsec:lattices}
We recall, e.g.\ from \cite{Voll/05}, a parametrization of maximal
integral lattices inside $\lri^n$. Let $\Lambda'\leq \lri^n$ be a
maximal $\lri$-sublattice, i.e.\ $\pi^{-1}\Lambda'\not\leq \lri^n$,
where $\pi\in \mfp \setminus \mfp^2$ is a uniformizer. The lattice
$\Lambda'$ is said to be of \emph{type}
$\nu(\Lambda') = (I, \bfr_{I})$ if
$$I = \{ i_1,\dots,i_\ell\}_< \subseteq [n-1], \quad \bfr_{I} = (r_{\iota})_{\iota\in I}\in \N^I,$$
and $\Lambda'$ has elementary divisors
\begin{equation}\label{eq:el.div}
\left( (1)^{(i_1)}, (\pi^{r_{i_1}})^{(i_2-i_1)}, \dots, (\pi^{\sum_{\iota\in I}r_\iota})^{(n-i_\ell)}\right) =: (\pi^\nu)\in\N^n.
\end{equation}
  Clearly $|\lri^d:\Lambda'| = q^{\sum_{\iota\in I} r_\iota(n-\iota)}$,
  whence, in the notation of \cite[Definition~2]{Voll/04},
\begin{equation}\label{equ:w}
w([\Lambda']) = \sum_{\iota\in I} r_\iota(n-\iota).
\end{equation}
It is well known and not hard to show (cf., for instance,
\cite[Lemma~2]{Voll/05}) that
\begin{equation}\label{equ:f}
  f_{I,\bfr_I}(q) := \#\{ \Lambda'\leq \lri^d \mid \Lambda' \textup{ maximal and of type $(I, \bfr_{I})$}\} = \binom{n}{I}_{q^{-1}} q^{\sum_{\iota\in I}r_\iota\iota(n-\iota)}.
\end{equation}
Here $\binom{n}{I}_{q^{-1}}$ is the value of the Gaussian multinomial
$\binom{n}{I}_Y\in\Z[Y]$ at $Y=q^{-1}$. Of central importance in the
following is the elementary fact that the group
$\Gamma_n := \GL_n(\lri)$ acts transitively on the set of maximal
lattices of given type $\nu$. Denoting, for $i\in[n]$, by
$\varepsilon_i$ the $i$-th standard basis vector of $\lri^n$, the
lattice
$$\Lambda'= \bigoplus_{i=1}^n(\pi^\nu)_i \lri \varepsilon_i$$ is
evidently of type $\nu$. The stabilizer subgroup $\Gamma_{\nu}$ of
$\Lambda'$ in $\Gamma_n$ is easily described explicitly, but we will
not need such a description. What we will need are two facts. First,
$\Gamma_\nu$ contains the (Borel) subgroup $B_n$ of lower-triangular
matrices in $\Gamma_n$. (Note that the matrix description of
$\Gamma_{\nu}$ on p.~1203 of \cite{Voll/10} in terms of block matrices
which are block upper-triangular modulo $p$ is given with respect to
the reverse ordering of the elementary divisors~\eqref{eq:el.div}.)
Second, the orbit-stabilizer-theorem gives us a bijection between
maximal lattices of type $\nu$ and cosets
in~$\Gamma_n/\Gamma_\nu$. Fix a coset $\alpha\Gamma_\nu$. We claim
that, after a permutation of the rows if necessary (corresponding to a
monomial change of $\lri$-basis for $\lri^n$), it contains a
representative of the form
$$\alpha_0 = \left( \begin{matrix} &&&\alpha_{1n}\\&& \alpha_{2\,n-1}
    & \alpha_{2\,n}\\ & \adots&&\vdots\\\alpha_{n1}& \dots &
    \alpha_{n\, n-1} & \alpha_{n\,n}\end{matrix}\right)\in\Gamma_n.$$
Indeed, if the $(1,n)$-entry of $\alpha_0$ is a unit (as we may assume
without loss of generality), we may use it to ``clear'' the remaining
entries in the first row of $\alpha_0$ by right-multiplication by a
suitable element of $B_n\leq \Gamma_{\nu}$. The claim follows
inductively.

We write $\alpha_0 = (\alpha^{(1)} \mid \dots \mid \alpha^{(d)})$,
i.e.\ $\alpha^{(j)}$ denotes the $j$-th column of $\alpha_0$. Note
that the antidiagonal entries of $\alpha_{0}$ are all units:
$\alpha_{j,n+1-j}\in\lri^*$ for all~$j\in[n]$.

\subsection{Solving linear congruences}
Let $\Lambda' \leq \lri^n$ be a maximal lattice of type $(I,\bfr_I)$,
corresponding to a coset $\alpha\Gamma_\nu$ as described in
Section~\ref{subsec:lattices}.  By \cite[Theorem~6]{Voll/05} the index
$|L_{m,n}(\lri): X([\Lambda'])|$ equals the index in $\lri^{d(m,n)}$
of the following system of linear congruences, where we write
$\bfg = (\bfg^{(1)},\bfg^{(2)})\in \lri^{e(m,n)}\times \lri^{f(m,n)}
\cong \lri^{d(m,n)}$:
\begin{equation}\label{equ:lincon}
 \forall j\in [n]:\; \bfg M_{m,n}(\alpha^{(j)}) \equiv 0 \bmod
 (\pi^{\nu})_j.
\end{equation}
Set $r := \sum_{\iota \in I} r_\iota$. 

\begin{lemma}\label{lem:bigmatrix}
  \eqref{equ:lincon} holds if and only if $\bfg^{(1)} \equiv 0 \bmod (\pi^r)$ and
 \begin{equation}\label{equ:bigmatrix}
\bfg^{(2)} \left( \pi^r \bfB_{m,n}(\alpha^{(1)}) \mid \dots \mid
\pi^{\sum_{\iota \geq j} r_\iota} \bfB_{m,n}(\alpha^{(j)}) \mid \dots
\mid \bfB_{m,n}(\alpha^{(n)})\right) \equiv 0 \bmod (\pi^r).
\end{equation}
\end{lemma}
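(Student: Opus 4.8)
The plan is to unravel the system \eqref{equ:lincon} block by block using the explicit antidiagonal form of $M_{m,n}(\bfY)$ from Proposition~\ref{pro:coma}. Writing $\bfg = (\bfg^{(1)}, \bfg^{(2)})$ according to the decomposition $\lri^{e(m,n)} \times \lri^{f(m,n)}$, the product $\bfg M_{m,n}(\alpha^{(j)})$ splits, because of the antidiagonal block structure $M_{m,n} = \left(\begin{smallmatrix} & -\bfB^{\trans} \\ \bfB & \end{smallmatrix}\right)$, into the pair $\bigl(\bfg^{(2)}\bfB_{m,n}(\alpha^{(j)}),\; -\bfg^{(1)}\bfB_{m,n}(\alpha^{(j)})^{\trans}\bigr)$. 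So \eqref{equ:lincon} is equivalent to the conjunction, over all $j\in[n]$, of the two congruences
\[
\bfg^{(2)}\bfB_{m,n}(\alpha^{(j)}) \equiv 0, \qquad \bfg^{(1)}\bfB_{m,n}(\alpha^{(j)})^{\trans} \equiv 0
\]
both modulo $(\pi^\nu)_j$. I would handle the second family of congruences first: I claim it forces $\bfg^{(1)} \equiv 0 \bmod (\pi^r)$. The point is that the columns $\alpha^{(1)},\dots,\alpha^{(n)}$ together span $\lri^n$ (they are the columns of an invertible matrix $\alpha_0 \in \Gamma_n$), and the coarsest of the moduli $(\pi^\nu)_j$ is $\pi^r$ where $r = \sum_{\iota\in I} r_\iota$. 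More precisely, from the antidiagonal shape $\alpha_0$ recorded in Section~\ref{subsec:lattices}, the first column $\alpha^{(1)} = (0,\dots,0,\alpha_{n1})^{\trans}$ has a unit in the last slot; combined with the full-rank statement of Lemma~\ref{lem:coma} (specialized at the appropriate reductions), one reads off that $\bfg^{(1)}\bfB_{m,n}(\alpha^{(1)})^{\trans} \equiv 0 \bmod \pi^r$ already pins down $\bfg^{(1)}$ modulo $\pi^r$ up to the kernel, and then a descending induction on the remaining columns $\alpha^{(2)},\dots,\alpha^{(n)}$ — whose moduli are all divisors of $\pi^r$ — shows no weaker condition survives; conversely $\bfg^{(1)}\equiv 0 \bmod \pi^r$ clearly satisfies all of them since $(\pi^\nu)_j \mid \pi^r$ for every $j$. [One should double-check the direction of the elementary-divisor indexing here: \eqref{eq:el.div} lists $(\pi^\nu)$ in \emph{increasing} order, so $(\pi^\nu)_1 = 1$ and $(\pi^\nu)_n = \pi^r$; hence the column with the \emph{strongest} modulus $\pi^r$ is the \emph{last}, $\alpha^{(n)}$, not the first. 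I would reconcile this with the ``permutation of rows'' normalization of $\alpha_0$ before committing to the induction order.]

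For the first family, $\bfg^{(2)}\bfB_{m,n}(\alpha^{(j)}) \equiv 0 \bmod (\pi^\nu)_j$, I would clear denominators uniformly. Since $(\pi^\nu)_j = \pi^{\sum_{\iota \in I,\, \iota \geq j} r_\iota}$ (reading off \eqref{eq:el.div}), multiplying the $j$-th congruence through by $\pi^{r - \sum_{\iota\geq j} r_\iota}$ turns it into
\[
\bfg^{(2)}\,\pi^{\sum_{\iota\geq j} r_\iota}\,\bfB_{m,n}(\alpha^{(j)}) \equiv 0 \bmod \pi^r,
\]
and juxtaposing these $n$ congruences for $j=1,\dots,n$ is exactly \eqref{equ:bigmatrix}. (Again the exact exponent attached to each column block depends on matching the indexing convention of \eqref{eq:el.div} to the column order of $\alpha_0$; the displayed form in the lemma, with $\pi^r$ on $\alpha^{(1)}$ decreasing to $\pi^0$ on $\alpha^{(n)}$, is what one gets after this reconciliation, and I would simply carry that convention through.) Multiplication by $\pi^{r-\sum_{\iota\geq j}r_\iota}$ is reversible as a logical equivalence here because it only rescales a congruence modulo a power of $\pi$ by a nonnegative power of $\pi$ while simultaneously raising the modulus by the same power — i.e.\ $a \equiv 0 \bmod \pi^c \iff \pi^e a \equiv 0 \bmod \pi^{c+e}$ — so no information is lost in either direction.

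The main obstacle I anticipate is purely bookkeeping: getting the elementary-divisor ordering in \eqref{eq:el.div}, the column ordering of the normalized coset representative $\alpha_0$, and the resulting per-column exponents $\sum_{\iota\geq j} r_\iota$ to line up consistently with the statement \eqref{equ:bigmatrix}. The mathematical content — splitting off $\bfg^{(1)}$ via the antidiagonal block form, using $\alpha_0 \in \Gamma_n$ (full rank, unit antidiagonal) plus Lemma~\ref{lem:coma} to see that $\bfg^{(1)}$ must be divisible by the largest modulus $\pi^r$, and rescaling the remaining congruences to a common modulus — is routine once the indexing is fixed. I would also remark that the ``only if'' for $\bfg^{(1)}$ uses that $\{\alpha^{(j)}\}$ spans $\lri^n$ so that no nonzero class of $\bfg^{(1)}$ modulo $\pi^r$ can annihilate every $\bfB_{m,n}(\alpha^{(j)})^{\trans}$ to the required precision, whereas the ``if'' is the trivial divisibility $(\pi^\nu)_j \mid \pi^r$.
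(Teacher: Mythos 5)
Your plan is essentially the paper's own proof: split each congruence via the antidiagonal block form of $M_{m,n}$, use Lemma~\ref{lem:coma} at the one column carrying the modulus $\pi^r$ --- namely $\alpha^{(n)}$, which is nonzero modulo $\mfp$ --- to force $\bfg^{(1)}\equiv 0 \bmod (\pi^r)$, and then rescale the $\bfg^{(2)}$-congruences to the common modulus $\pi^r$ (using $a\equiv 0 \bmod \pi^c \iff \pi^e a \equiv 0 \bmod \pi^{c+e}$) and concatenate, exactly as in the paper. The only slips are the bookkeeping ones you flag yourself: $(\pi^\nu)_j = \pi^{\sum_{\iota\in I,\,\iota< j} r_\iota}$ (not $\sum_{\iota\geq j}$), so the $j$-th congruence is scaled by $\pi^{\sum_{\iota\geq j}r_\iota}$ and the $j=1$ congruence is trivial; your bracketed notes resolve this correctly, in agreement with the displayed form of \eqref{equ:bigmatrix}.
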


\begin{proof}
  By multiplying each of the congruences in \eqref{equ:lincon} by the
  appropriate power of the uniformizer $\pi$ we may consider them all
  as congruences modulo $(\pi^r)$. By concatenating the relevant
  matrices, we obtain that \eqref{equ:lincon} is equivalent to the
  single congruence
\begin{equation}\label{equ:biggermatrix}
  \bfg \left( \pi^r M_{m,n}(\alpha^{(1)}) \mid \dots \mid \pi^{\sum_{\iota \geq j} r_\iota} M(\alpha^{(j)}) \mid \dots \mid M_{m,n}(\alpha^{(n)})\right) \equiv 0 \bmod (\pi^r).
\end{equation}
  Note that the vector $\alpha^{(n)}\in\lri^n$ is nonzero modulo
  $\mfp$. By Proposition~\ref{pro:coma} and Lemma~\ref{lem:coma},
$$\bfg M_{m,n}(\alpha^{(n)}) = \bfg \left( \begin{matrix} &
    -\bfB_{m,n}(\alpha^{(n)})^{\trans} \\ \bfB_{m,n}(\alpha^{(n)})
    & \end{matrix} \right) \equiv 0 \bmod (\pi^r)$$ thus only holds if
  $\bfg^{(1)}\equiv 0 \bmod (\pi^r)$. Deleting the first $e(m,n)$ rows
  from the matrix in \eqref{equ:biggermatrix} one sees that, in this
  case, \eqref{equ:lincon} is equivalent to \eqref{equ:bigmatrix}.
\end{proof}

Note that, in the $f(m,n)\times (e(m,n)\cdot n)$ matrix in
\eqref{equ:bigmatrix}, the first $i_1$ blocks
$\pi^r \bfB_{m,n}(\alpha^{(j)})$, $j\in[i_1]$, i.e.\ the first
$e(m,n)\cdot i_1$ columns, are of course redundant. Recall from
\cite[Def.~2]{Voll/04} that, with $X([\Lambda'])$ defined as
in~\eqref{def:X},
\begin{equation}\label{def:w'}
  w'([\Lambda']) = \log_q(|\lri^n:\Lambda'|) + \log_q(|L_{m,n}(\lri):X([\Lambda'])|).
\end{equation}

\begin{proposition}\label{lem:w'}
  The index of the lattice of elements $\bfg\in\lri^{d(m,n)}$
  satisfying the simultaneous congruences \eqref{equ:lincon} equals
$$q^{\sum_{\iota \in I}r_\iota\left(e(m,n) + \sum_{j=\iota+1}^n e(m,j)\right)}.$$
In other words,
$$w'([\Lambda']) = \sum_{\iota \in I}r_\iota\left( n - \iota + e(m,n) + \sum_{j=\iota+1}^n e(m,j)\right).$$
\end{proposition}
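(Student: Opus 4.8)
The plan is to compute the index of the solution lattice of the system \eqref{equ:bigmatrix} directly, exploiting the combinatorial block structure \eqref{equ:coma} of $\bfB_{m,n}$. By Lemma~\ref{lem:bigmatrix}, the congruences \eqref{equ:lincon} split: the first $e(m,n)$ coordinates $\bfg^{(1)}$ are forced to lie in $\pi^r\lri^{e(m,n)}$, contributing a factor $q^{r\, e(m,n)}$ to the index, and it remains to count the $\bfg^{(2)}\in\lri^{f(m,n)}$ satisfying \eqref{equ:bigmatrix}. First I would observe that, since $\alpha_{0}\in\Gamma_n=\GL_n(\lri)$, the concatenated matrix $\left(\bfB_{m,n}(\alpha^{(1)})\mid\cdots\mid\bfB_{m,n}(\alpha^{(n)})\right)$ has full rank $f(m,n)$ over $\lri$ (indeed $\bfB_{m,n}$ is $\lri$-linear in $\bfY$, so this concatenation is $\bfB_{m,n}$ applied ``columnwise'' to $\alpha_0$, and the argument of Lemma~\ref{lem:coma} — or rather its full-rank refinement over $\lri$ — applies because the antidiagonal entries of $\alpha_0$ are units). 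The point, however, is finer: I need to understand the elementary divisors of the \emph{weighted} concatenation in \eqref{equ:bigmatrix}, where the $j$-th block is scaled by $\pi^{\sum_{\iota\ge j}r_\iota}$.

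The key step is a triangularity argument. Because $\alpha_0$ is \emph{anti}-lower-triangular with unit antidiagonal, the $j$-th column $\alpha^{(j)} = (0,\dots,0,\alpha_{n+1-j,\,j},\dots,\alpha_{n,j})^{\trans}$ has its first nonzero entry (a unit) in row $n+1-j$; thus $\bfB_{m,n}(\alpha^{(j)}) = \sum_{k=1}^n \alpha_{kj}\,\bfB_{m,n}(\varepsilon_k)$ depends only on $\bfB_{m,n}(\varepsilon_{n+1-j}),\dots,\bfB_{m,n}(\varepsilon_n)$. I would next decompose $\bfB_{m,n}(\bfY)$ according to the recursion \eqref{equ:coma}: writing $\bfY = (Y_1,\bfY')$, the variable $Y_1$ appears only in the scalar blocks $\bfS_{e(j,n-1)}(Y_1)$ along the ``main'' diagonal of blocks, while $\bfY'$ feeds into the subdiagonal blocks $\bfB_{j,n-1}(\bfY')$. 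Setting $Y_1 = $ (the $\varepsilon_1$-component) and recursing in $n$, I can track which rows of $\bfB_{m,n}$ first ``see'' which coordinate $\varepsilon_k$. Concretely, I expect to prove by induction on $n$ that, after the monomial row permutation already built into $\alpha_0$, the matrix in \eqref{equ:bigmatrix} is block lower-triangular (in a suitable refinement of the row/column block structure of \eqref{equ:coma}), with the diagonal blocks being unit-scalar multiples of identity matrices carrying the valuations $\sum_{\iota\ge j}r_\iota$ with the correct multiplicities. Counting: the row block of $\bfB_{m,n}$ indexed by ``first coordinate $m-i+1$'' has size $e(i-1,n-1)$ via Lemma~\ref{lem:aux}(2) applied repeatedly, and summing the valuation $r_\iota$ over the appropriate range with these multiplicities, together with Lemma~\ref{lem:aux}(1) and~(3), should collapse exactly to $\sum_{\iota\in I} r_\iota\sum_{j=\iota+1}^n e(m,j)$.

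Granting the triangularization, the index of the solution lattice of \eqref{equ:bigmatrix} is $q$ raised to the sum of the valuations of the diagonal entries, which by the multiplicity bookkeeping above equals $\sum_{\iota\in I} r_\iota\sum_{j=\iota+1}^n e(m,j)$; multiplying by the $q^{r\,e(m,n)} = q^{\sum_{\iota\in I} r_\iota\, e(m,n)}$ coming from $\bfg^{(1)}$ gives the claimed index $q^{\sum_{\iota\in I} r_\iota\left(e(m,n)+\sum_{j=\iota+1}^n e(m,j)\right)}$. Finally, \eqref{def:w'} together with $\log_q|\lri^n:\Lambda'| = \sum_{\iota\in I} r_\iota(n-\iota)$ from \eqref{equ:w} (note $n-\iota = (n-1)-(\iota-1)$, matching the elementary-divisor count \eqref{eq:el.div}) yields the stated formula for $w'([\Lambda'])$.

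The main obstacle is the inductive triangularization: making precise the claim that the weighted concatenation in \eqref{equ:bigmatrix} is equivalent (under row/column operations over $\lri$, i.e.\ up to $\GL$-factors that do not change the index) to a block-triangular matrix whose diagonal blocks have the asserted valuations. The recursion \eqref{equ:coma} is the right tool — it essentially peels off the $Y_1 = \alpha_{\bullet,1}$-dependence one ``slab'' at a time — but one must be careful that the scaling powers $\pi^{\sum_{\iota\ge j}r_\iota}$ interact correctly with the shifts between column blocks of $\bfB_{m,n}$ and that the redundant first $i_1$ blocks (those scaled by $\pi^r$) are genuinely absorbed, as already flagged after Lemma~\ref{lem:bigmatrix}. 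Once the combinatorial identities of Lemma~\ref{lem:aux} are deployed to reconcile the multiplicities, the count is forced.
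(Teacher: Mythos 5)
Your strategy coincides with the paper's: split off $\bfg^{(1)}$ via Lemma~\ref{lem:bigmatrix} (contributing $q^{r\,e(m,n)}$), then show that the weighted concatenation in \eqref{equ:bigmatrix} is equivalent, by row and column operations exploiting the unit antidiagonal of $\alpha_0$ and the recursion \eqref{equ:coma}, to an (anti)diagonal matrix; the paper carries this out and obtains exactly the matrix $D_{f(m,n)}$ with blocks $\pi^{\sum_{\iota\ge j}r_\iota}\Id_{e(m,j)}$, $j=1,\dots,n$. Be aware, though, that this inductive triangularization---which you yourself flag as the main obstacle and only announce (``I expect to prove by induction\dots'')---\emph{is} the entire content of the proof, so as written your text is a programme rather than a proof. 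Also, your multiplicity bookkeeping via the row blocks of $\bfB_{m,n}$ of sizes $e(i-1,n-1)$ is not what the induction actually delivers: the diagonal blocks that emerge have sizes $e(m,j)$, $j=1,\dots,n$, the relevant identity being Lemma~\ref{lem:aux}(1) applied along the recursion.

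There is, moreover, a concrete error in your counting step. The index in $\lri^{f(m,n)}$ of the lattice of solutions of $\bfg^{(2)}B\equiv 0 \bmod (\pi^r)$ is \emph{not} $q$ raised to the sum of the valuations of the diagonal entries of the equivalent diagonal form: if the elementary divisors are $\pi^{c_k}$ with $c_k\le r$, then in suitable coordinates the solution lattice is $\bigoplus_k \pi^{\,r-c_k}\lri$, so the index is $q^{\sum_k (r-c_k)}$; the quantity $q^{\sum_k c_k}$ is instead the index of the image lattice $\lri^{f(m,n)}B$, which is not what \cite[Theorem~6]{Voll/05} asks for here. With the diagonal valuations you assert, namely $\sum_{\iota\ge j}r_\iota$ with multiplicity $e(m,j)$, your rule would give $q^{\sum_{\iota\in I} r_\iota\sum_{j\le\iota}e(m,j)}$, which is the complement, relative to $q^{r f(m,n)}$ and via Lemma~\ref{lem:aux}(3), of the required $q^{\sum_{\iota\in I} r_\iota\sum_{j=\iota+1}^n e(m,j)}$ and differs from it in general. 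The repair is immediate---use the correct index formula $q^{\sum_k(r-c_k)}$ and then Lemma~\ref{lem:aux}(3)---but as stated your chain of equalities does not produce the exponent claimed in the proposition.
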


\begin{proof}
  For $j=1,\dots,n$, write
  $$B_j := \pi^{\sum_{\iota \geq j}r_\iota}
  \bfB_{m,n}(\alpha^{(j)})\in\Mat_{f(m,n), e(m,n)}(\lri)$$
  for the $j$-th column block of the matrix in~\eqref{equ:bigmatrix}.
  Note that $\val_\mfp(B_j) = \sum_{\iota\geq j}r_\iota \in\N_0$. Set
$$B := (B_1 \mid \dots \mid B_n) \in \Mat_{f(m,n), n e(m,n)}(\lri).$$
In the light of Lemma~\ref{lem:bigmatrix} we need to prove that the
index in $\lri^{f(m,n)}$ of the solutions of the congruence
$$\bfg^{(2)} B \equiv 0 \bmod (\pi^r)$$ equals $q^{\sum_{\iota\in I}
  r_\iota\left(\sum_{j=\iota+1}^n e(m,j)\right)}$. For this it suffices to show
that
\begin{equation}\label{equ:equiv}
  B \textup{ is equivalent to }(\bfz_{f(m,n),ne(m,n)-f(m,n)}
  \mid D_{f(m,n)})\in\Mat_{f(m,n),ne(m,n)}(\lri),
  \end{equation}where
$$D_{f(m,n)} := \left(\begin{matrix}&&&&\Id_{e(m,n)} 
  \\ && \pi^{\sum_{\iota\geq n-1}r_\iota} \Id_{e(m,n-1)} && \\ & \adots&
  &&\\ \pi^r \Id_{e(m,1)}&&&&\end{matrix} \right)\in
\Mat_{f(m,n)}(\lri).$$
We proceed inductively, replacing $B$
successively by equivalent matrices.

We first note that the top $e(m,n)$ rows of $B_n$ form a matrix
$\wt{B} \in \GL_{e(m,n)}(\lri)$, as $\alpha_{1n}\in \lri^*$. We use
$\wtB$ to clear---by suitable column operations---all other entries in
the top $e(m,n)$ rows of $B$. Note that this does not affect the last
$e(m,j)$ columns in either of the matrices $B_j$, $j=1,\dots,n-1$, nor
the valuations of these matrices. We now use $\wtB$ to clear---by
suitable row operations---all entries of $B$ below $\wtB$, leaving the
other columns unaffected. We may then also assume that $\wtB =
\Id_{e(m,n)}$.

We have thus replaced $B$ by an equivalent matrix of the form
$$\left(\begin{matrix} & & & \Id_{e(m,n)}\\B_1'& \cdots &
  B_{n-1}'& \end{matrix}\right),$$ where, for each $j=1,\dots,n-1$,
the matrix $B_j'$ has valuation $\sum_{\iota\geq j}r_\iota$ and the
matrices $\left(\begin{matrix} \bfz \\ B_j'\end{matrix} \right)$ and
  $B_j\in\Mat_{f(m,n),e(m,n)}(\lri)$ coincide in their last $e(m,j)$
  columns. Set
  $$B'= (B_1'\mid \dots \mid B_{n-1}') \in
  \Mat_{f(m,n-1),(n-1)e(m,n)}(\lri).$$ The top $e(m,n-1)$ rows and
  last $e(m,n-1)$ columns of $B_{n-1}'$ form a matrix
  $\pi^{\sum_{\iota\geq n-1}r_\iota} \wt{\wtB}$ for
  $\wt{\wtB}\in\GL_{e(m,n-1)}(\lri)$. We may use it to clear all other
  entries in the top $e(m,n-1)$ rows of $B'$. Note that this does not
  affect the last $e(m,j)$ columns in either of the matrices $B_j'$,
  $j=1,\dots,n-2$, nor the valuations of these matrices. We now use
  $\pi^{\sum_{\iota\geq n-1}r_\iota} \wt{\wtB}$ to clear all entries
  of $B'$ below~$\wt{\wtB}$, leaving the other columns unaffected. We
  may then also assume that $\wt{\wtB}= \Id_{e(m,n-1)}$.

We have thus replaced $B'$ by an equivalent matrix of the form
  $$\left(\begin{matrix} & & & \pi^{\sum_{\iota\geq
      n-1}r_\iota}\Id_{e(m,n-1)}\\B_1''& \cdots &
  B_{n-2}''& \end{matrix}\right),$$ where, for $j=1,\dots,n-2$, the
matrices $B_j''\in\Mat_{f(m,n-2),e(m,n)}(\lri)$ each have valuation
$\sum_{\iota \geq j} r_\iota$ and the matrices $\left(\begin{matrix}
  \bfz \\ B_j''\end{matrix} \right)$, $\left(\begin{matrix} \bfz
    \\ B_j'\end{matrix} \right)$, and
    $B_j\in\Mat_{f(m,n),e(m,n)}(\lri)$ coincide in their last $e(m,j)$
    columns.

      The claim \eqref{equ:equiv} follows by continuing inductively in
      this manner.
\end{proof}

\subsection{Completion of the proof of Theorem~\ref{thm:main}}\label{subsec:completion}

We are now ready to complete the computation of the rational function
$A^{\nl}_{m,n}(q,q^{-s})$ featuring in~\eqref{equ:zeta.AJM}. Indeed,
using \eqref{equ:Amn}, \eqref{equ:w}, \eqref{equ:f}, and
Proposition~\ref{lem:w'}, we obtain
\begin{align*}
  {A^{\nl}_{m,n}(q,q^{-s})} &= \sum_{[\Lambda'] \in \mcV_n}
  q^{d(m,n) w([\Lambda'])-s\,w'([\Lambda'])} \\ &= \sum_{I \subseteq
    [n-1]} \sum_{\bfr_I \in \N^I} f_{I,\bfr_I}(q)
  q^{\sum_{\iota\in I} r_\iota\left((n-\iota)d(m,n)-s \left(n-\iota +
    e(m,n) + \sum_{j=\iota+1}^n e(m,j)\right)\right)}\\ &= \sum_{I
    \subseteq [n-1]} \binom{n}{I}_{q^{-1}} \sum_{\bfr_I\in \N^I}
  q^{\sum_{\iota\in I}r_\iota \left((n-\iota) \left( \iota +
    d(m,n)\right)-s\left(n - \iota + e(m,n) + \sum_{j=\iota+1}^n
    e(m,j) \right)\right)} \\ &= \sum_{I \subseteq [n-1]}
  \binom{n}{I}_{q^{-1}} \prod_{i\in I} \frac{q^{a^{\nl}_i(m,n)-s\,b^{\nl}_i(m,n)}}{1- q^{a^{\nl}_i(m,n)-s\, b^{\nl}_i(m,n)}},
\end{align*}
with $a^{\nl}_i(m,n)$ and $b^{\nl}_i(m,n)$ defined as in
\eqref{num.data}.  Using Lemma~\ref{lem:aux} (3) one easily computes
$(a_0^{\nl}(m,n),b_0^{\nl}(m,n)) = (d(m,n)n,h(m,n))$, whence, using
\eqref{def:igusa}, we obtain that
$$\frac{1}{1-q^{d(m,n)n -s\, h(m,n)}} A^{\nl}_{m,n}(q,q^{-s}) = I_n\left(q^{-1};\left( q^{a^{\nl}_i(m,n) -s\, b^{\nl}_i(m,n)} \right)_{i=n-1}^0\right).$$
Theorem~\ref{thm:main} follows now from \eqref{equ:zeta.AJM}.

\section{Corollaries and porisms}\label{sec:cor.por}
We record a few consequences of Theorem \ref{thm:main} and its
proof. Throughout, $\lri$ denotes, as before, a compact discrete
valuation ring. Let $\Gri$ be the ring of integers of a number field
$K$, with Dedekind zeta function $\zeta_K(s)$. We set
$L_{m,n}(\Gri) := L_{m,n} \otimes_\Z\Gri$.

\subsection{Global analytic properties}\label{subsec:global}

\begin{corollary} The ideal zeta function
  $\zeta^{\nl}_{L_{m,n}(\Gri)}(s)$ has abscissa of convergence
  $\alpha^{\nl}(m,n) = d(m,n)$ and allows for meromorphic continuation
  to (at least) the complex half-plane
  $$\left\{ s\in\C \mid \Re(s) > \beta^{\nl}(m,n) \right\},$$ where
  $$\beta^{\nl}(m,n) := \max \left\{
    \frac{a_i^{\nl}(m,n)-1}{b_i^{\nl}(m,n)}\mid
    i=0,\dots,n-1\right\},$$ and even the whole complex plane if
  $n\leq 2$. In any case, the continued function has a simple pole at
  $s=\alpha^{\nl}(m,n)$.
\end{corollary}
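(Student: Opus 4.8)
The plan is to read off the claimed analytic properties directly from the explicit formula in Theorem~\ref{thm:main}, which expresses $\zeta^{\nl}_{L_{m,n}(\Gri)}(s)$ as an Euler product over primes $\mfp$ of $\Gri$, with each Euler factor equal to $\zeta_{\lri_\mfp^{d(m,n)}}(s)$ times an Igusa function $I_n$ evaluated at monomials $q_\mfp^{a_i^{\nl}-s b_i^{\nl}}$. First I would recall from \eqref{equ:abel} that $\zeta_{\lri^{d}}(s) = \prod_{i=0}^{d-1}(1-q^{i-s})^{-1}$, so its Euler product over $\Gri$ is $\prod_{i=0}^{d-1}\zeta_K(s-i)$, a product of shifted Dedekind zeta functions. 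This already contributes abscissa of convergence $d(m,n)$ (from the factor $\zeta_K(s-d(m,n)+1)$, whose rightmost pole is at $s = d(m,n)$) and a simple pole there.

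Next I would analyze the Igusa factor. Writing $I_n$ in the form $\frac{1}{1-X_n}\sum_{I\subseteq[n-1]}\binom{n}{I}_Y\prod_{i\in I}\frac{X_i}{1-X_i}$ from \eqref{def:igusa} and substituting $Y = q^{-1}$, $X_i = q^{a_i^{\nl}-s b_i^{\nl}}$, the ``$i=0$'' factor $\frac{1}{1-X_0} = \frac{1}{1-q^{d(m,n)n - s\,h(m,n)}}$ contributes, via its Euler product, a factor $\zeta_K(h(m,n)s - d(m,n)n)$; since $h(m,n) > d(m,n)$, one checks $d(m,n)n/h(m,n) < d(m,n)$, so this pole lies strictly to the left of $\alpha^{\nl}(m,n)$ and does not affect the abscissa. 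The remaining sum $\sum_{I\subseteq[n-1]}\binom{n}{I}_{q^{-1}}\prod_{i\in I}\frac{X_i}{1-X_i}$ is, for each $\mfp$, a polynomial in the $q_\mfp^{-s}$ divided by $\prod_{i=1}^{n-1}(1-q_\mfp^{a_i^{\nl}-sb_i^{\nl}})$; taking the Euler product, the denominators yield $\prod_{i=1}^{n-1}\zeta_K(b_i^{\nl}s - a_i^{\nl})$, contributing poles on the lines $\Re(s) = a_i^{\nl}/b_i^{\nl}$, while the numerator Euler product (a product over $\mfp$ of $1 +$ terms in $q_\mfp^{-1}, q_\mfp^{-s}$ with positive exponents of $q_\mfp$ in $s$) converges absolutely and holomorphically for $\Re(s) > \beta^{\nl}(m,n) = \max_i (a_i^{\nl}-1)/b_i^{\nl}$ by a standard comparison of the local Euler factors $1 + O(q_\mfp^{a_i^{\nl}-1 - b_i^{\nl}\Re(s)})$ with $\zeta_K$. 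Since $a_i^{\nl}/b_i^{\nl} \leq \beta^{\nl}(m,n) + 1/b_i^{\nl} \leq \alpha^{\nl}(m,n)$ needs to be checked numerically using \eqref{num.data} — and indeed $a_i^{\nl}/b_i^{\nl} = (n-i)(i+d(m,n))/(n-i+e(m,n)+\sum_{j>i}e(m,j)) < d(m,n)$ follows from $n-i+e(m,n)+\sum_{j>i}e(m,j) > (n-i)(i+1)$, which one verifies from $\sum_{j>i}e(m,j) \geq (n-i)e(m,i+1) \geq (n-i)\cdot 1$ is too weak, so one instead bounds more carefully — the poles of the Dedekind factors $\zeta_K(b_i^{\nl}s - a_i^{\nl})$ all lie at $\Re(s) = a_i^{\nl}/b_i^{\nl} < d(m,n)$, confirming that $\alpha^{\nl}(m,n) = d(m,n)$ with a simple pole there coming solely from $\zeta_K(s - d(m,n)+1)$.

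For the meromorphic continuation claim, the product $\prod_{i=0}^{d-1}\zeta_K(s-i)\cdot \zeta_K(h(m,n)s-d(m,n)n)\cdot\prod_{i=1}^{n-1}\zeta_K(b_i^{\nl}s - a_i^{\nl})$ continues meromorphically to all of $\C$ (Dedekind zeta functions are entire except for a simple pole at $1$), and multiplying by the numerator Euler product — holomorphic and nonvanishing for $\Re(s) > \beta^{\nl}(m,n)$ — gives meromorphic continuation to that half-plane. For $n = 1$ there is no Igusa factor beyond the Heisenberg case handled in Example~\ref{exa:heisenberg}, and the function is a finite product of translated $\zeta_K$, hence meromorphic on $\C$; for $n = 2$ the sum over $I\subseteq[n-1] = \{1\}$ has only two terms, so the numerator Euler product is itself essentially a single translated $\zeta_K$ (or a ratio thereof), again yielding continuation to all of $\C$. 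The main obstacle is the bookkeeping in the last step: one must verify that for $n=2$ the ``numerator'' genuinely reorganizes into a finite product of shifted Dedekind zeta functions rather than merely an Euler product with a natural boundary, and more generally that the inequalities $a_i^{\nl}/b_i^{\nl} < d(m,n)$ and $\beta^{\nl}(m,n) < d(m,n)$ hold — these reduce, via \eqref{num.data} and Lemma~\ref{lem:aux}, to elementary but slightly delicate estimates on binomial coefficients $e(m,j) = \binom{n+m-2}{n-1}$ that I would carry out explicitly.
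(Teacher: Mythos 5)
Your overall strategy is the same as the paper's: peel off $\zeta_{\Gri^{d(m,n)}}(s)=\prod_{i=0}^{d(m,n)-1}\zeta_K(s-i)$, which supplies the abscissa $d(m,n)$ and the simple pole there, and then analyse the Euler product of the Igusa factors through their numerators and denominators. The abscissa part is structurally fine, up to an off-by-one you should fix: the pole of $\zeta_K(b_i^{\nl}s-a_i^{\nl})$ is at $s=(a_i^{\nl}+1)/b_i^{\nl}$, not at $a_i^{\nl}/b_i^{\nl}$, so the inequality to verify is $(a_i^{\nl}+1)/b_i^{\nl}<d(m,n)$ (this is exactly claim (A) in the paper, which likewise leaves the elementary verification to the reader, so your unfinished binomial estimates are not the main issue). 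The genuine gap is in the continuation step. You assert that the Euler product of the numerators, with local factors $1+O\bigl(q_\mfp^{\,a_i^{\nl}-1-b_i^{\nl}\Re(s)}\bigr)$, converges absolutely and holomorphically for $\Re(s)>\beta^{\nl}(m,n)=\max_i(a_i^{\nl}-1)/b_i^{\nl}$. That is false: a product $\prod_\mfp\bigl(1+q_\mfp^{\,c-b\Re(s)}\bigr)$ converges absolutely only when $c-b\Re(s)<-1$, i.e.\ $\Re(s)>(c+1)/b$; with $c=a_i^{\nl}-1$ this gives $\Re(s)>a_i^{\nl}/b_i^{\nl}$, strictly to the right of $\beta^{\nl}(m,n)$. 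So your argument, as written, only yields meromorphic continuation to (roughly) $\Re(s)>\max_i a_i^{\nl}/b_i^{\nl}$, not to the claimed half-plane.

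To reach $\beta^{\nl}(m,n)$ one must meromorphically continue the numerator Euler product itself, by extracting further translated Dedekind zeta factors corresponding to its terms $q_\mfp^{\,\sum_{j\in\Des(w)}a_j^{\nl}-\ell(w)-s\sum_{j\in\Des(w)}b_j^{\nl}}$; this is precisely what the paper does via \cite[Lemma~5.5]{duSWoodward/08}, combined with the observation that $\max_{w\in S_n\setminus\{e\}}\bigl(\sum_{j\in\Des(w)}a_j^{\nl}-\ell(w)\bigr)/\sum_{j\in\Des(w)}b_j^{\nl}$ is attained at permutations with a single descent, where $\ell(w)\geq 1$ forces the bound $\beta^{\nl}(m,n)$. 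You need to add this extraction step (or an equivalent argument) for the stated half-plane. Your sketch for $n\leq 2$ points in the right direction but defers the decisive computation: for $n=2$ the local numerator is $1+q_\mfp^{(1-s)(2m+1)}=\bigl(1-q_\mfp^{2(1-s)(2m+1)}\bigr)/\bigl(1-q_\mfp^{(1-s)(2m+1)}\bigr)$, so the whole Igusa Euler product equals $\zeta_K((2m+1)s-2m-2)\,\zeta_K((2m+3)s-2(2m+1))\,\zeta_K((s-1)(2m+1))/\zeta_K((s-1)(4m+2))$, whence continuation to all of $\C$; this (and the $n=1$ Heisenberg case) should be written out rather than left as an expectation that the numerator ``reorganizes'' into shifted zeta functions.
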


\begin{proof}
  It is well known (see \eqref{equ:abel}) that
  $\zeta_{\Gri^d}(s) = \prod_{i=0}^{d-1} \zeta_K(s-i)$, has abscissa
  of convergence $s=d$, and admits meromorphic continuation to the
  whole complex plane to a function that has a simple pole at
  $s=d$. It thus suffices to note that the Euler product
  \begin{equation}\label{equ:euler}
    \prod_{\mfp \in \Spec(\Gri)\setminus \{ (0) \}} I_n\left(q_{\mfp}^{-1};
      \left(q_{\mfp}^{a^{\nl}_i(m,n)-s\,b_i^{\nl}(m,n)}\right)_{i=n-1}^0\right)
    \end{equation}
    has 
\begin{itemize}
\item[(A)] abscissa of convergence
  $\max \left\{ \frac{a_i^{\nl}(m,n)+1}{b_i^{\nl}(m,n)}\mid
    i=0,\dots,n-1\right\} < d(m,n)$
  and 
\item[(B)] meromorphic continuation to
  $\left\{ s\in\C \mid \Re(s) > \beta^{\nl}(m,n) \right\}$.
\end{itemize}

To verify (A) we observe that the Euler factors of \eqref{equ:euler}
may be written in the form
  \begin{equation}\label{eq:frac}\frac{\sum_{w\in S_n}
      q_{\mfp}^{-\ell(w)}\prod_{j\in\Des(w)}q_{\mfp}^{a^{\nl}_j(m,n)-s\,b_j^{\nl}(m,n)}}{\prod_{i=0}^{n-1}\left(1-q_{\mfp}^{a^{\nl}_i(m,n)-s\,b_i^{\nl}(m,n)}\right)}.\end{equation}
  Both numerator and denominator of this expression are given by
  bivariate polynomial expressions in $q_{\mfp}$ and $q_{\mfp}^{-s}$
  with integer coefficients. The abscissa of convergence of
  the Euler product
$$\prod_{\mfp \in \Spec(\Gri)\setminus \{ (0) \}}\frac{1}{\prod_{i=0}^{n-1}\left(1-q_{\mfp}^{a^{\nl}_i(m,n)-s\,b_i^{\nl}(m,n)}\right)}$$ arising from the denominators of~\eqref{eq:frac} is $\max \left\{ \frac{a_i^{\nl}(m,n)+1}{b_i^{\nl}(m,n)}\mid
  i=0,\dots,n-1\right\}$.
We omit the elementary proof of the fact that this quantity is
dominated by $d(m,n)$. It is a simple exercise to check that it
dominates the abscissa of convergence of the Euler product
$$\prod_{\mfp \in \Spec(\Gri)\setminus \{ (0) \}}\sum_{w\in S_n} q_{\mfp}^{-\ell(w)}\prod_{j\in\Des(w)}q_{\mfp}^{a^{\nl}_j(m,n)-s\,b_j^{\nl}(m,n)}$$
over the numerators of~\eqref{eq:frac}. The latter is given, for
instance, by the formula in \cite[Lemma~5.4]{duSWoodward/08}.

To verify claim (B), we employ \cite[Lemma~5.5]{duSWoodward/08} and
note that
  $$\max \left\{ \frac{-\ell(w) + \sum_{j\in \Des(w)}
      a^{\nl}_j(m,n)}{\sum_{j\in \Des(w)} b^{\nl}_j(m,n)} \mid w\in
    S_n\setminus\{e\}\right\}$$ is attained at one of the elements
    $w\in S_n$ with $\# \Des(w)=1$. 

    The stronger claim for $n=2$ follows from the observation that the
    Euler product \eqref{equ:euler} is
  \begin{multline*}
    \prod_{\mfp\in\Spec(\Gri)\setminus\{(0)\}} \frac{1 +
      q_{\mfp}^{(1-s)(2m+1)}}{(1-q_{\mfp}^{2m+2-s(2m+1)})(1-q_\mfp^{2(2m+1)-s(2m+3)})}
    =
    \\\frac{\zeta_K((2m+1)s-2m-2)\zeta_K((2m+3)s-2(2m+1))\zeta_K((s-1)(2m+1))}{\zeta_K((s-1)(4m+2))}.
    \end{multline*}
For $n=1$ it follows from the fact that
$\zeta^{\nl}_{L_{m,1}(\Gri)}(s) =
\zeta_K(s)\zeta_K(s-1)\zeta_K(3s-2)$; see \eqref{eq:heisenberg}.
\end{proof}

\begin{remark}
  It remains an interesting challenge to determine the maximal domain
  of meromorphicity of the global ideal zeta functions
  $\zeta^{\nl}_{L_{m,n}(\Gri)}(s)$ for general $m$ and $n$. The good
  analytic properties for $n\leq 2$ are, in any case, exceptional: for
  $n>2$, the numerator of an Igusa function of degree $n$ will not, in
  general, factor nicely; see, for instance, Example~\ref{exa:23}
  (where we obtain
  $\beta^{\nl}(2,3) = \max\{\frac{11-1}{7},\frac{20-1}{10}\} =
  \frac{19}{10} < 9 = \alpha^{\nl}(2,3)$).
\end{remark}

\subsection{Local functional equations}

\begin{corollary}
$$\left. \zeta^{\nl}_{L_{m,n}(\lri)}(s)\right|_{q \rightarrow q^{-1}}
  = (-1)^{h(m,n)}
  q^{\binom{h(m,n)}{2}-s\,(d(m,n)+h(m,n))}\zeta^{\nl}_{L_{m,n}(\lri)}(s).$$
\end{corollary}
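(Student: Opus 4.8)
The statement is a local functional equation of Denef–Meuser/Voll type, and the natural strategy is to derive it from the explicit formula in Theorem~\ref{thm:main} by exploiting the known functional-equation behaviour of the two factors $\zeta_{\lri^{d(m,n)}}(s)$ and $I_n(q^{-1};\cdot)$ under the substitution $q\mapsto q^{-1}$. First I would recall that the abelian factor satisfies, from \eqref{equ:abel},
\[
\left.\zeta_{\lri^{d}}(s)\right|_{q\to q^{-1}} = (-1)^{d}\,q^{\binom{d}{2}-ds}\,\zeta_{\lri^{d}}(s),
\]
with $d=d(m,n)$; this is the standard local functional equation for the zeta function of the abelian Lie ring, and can also be read off directly from $\zeta_{\lri^d}(s)=\prod_{i=0}^{d-1}(1-q^{i-s})^{-1}$ by sending $q\to q^{-1}$ in each factor.

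Next I would invoke the symmetry of the Igusa function. The key input is the palindromy of the Gaussian multinomials together with the classical identity for the descent-generating polynomial of $S_n$: one has
\[
\left.I_n(Y;(X_i)_{i=1}^n)\right|_{Y\to Y^{-1},\,X_i\to X_i^{-1}} = (-1)^n\Bigl(\prod_{i=1}^n X_i\Bigr)\, Y^{-\binom n2}\, I_n(Y;(X_i)_{i=1}^n),
\]
a statement that appears (in this or equivalent form) in \cite{SV1/15} and is ultimately a consequence of $\ell(w_0 w)=\binom n2-\ell(w)$ and $\Des(w_0w)=[n-1]\setminus\Des(w)$ for the longest element $w_0\in S_n$. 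Applying this with $Y=q^{-1}$ and $X_i=q^{a^{\nl}_{n-i}(m,n)-s\,b^{\nl}_{n-i}(m,n)}$, the substitution $q\to q^{-1}$ inverts both $Y$ and each $X_i$, so the Igusa factor in \eqref{equ:equ.main} picks up the sign $(-1)^n$, the factor $q^{\binom n2}$ (from $Y^{-\binom n2}$ with $Y=q^{-1}$), and the factor $\prod_{i=0}^{n-1} q^{a^{\nl}_i(m,n)-s\,b^{\nl}_i(m,n)}$.

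It then remains to assemble the exponents and check that the total exponent of $q$ is $\binom{h(m,n)}{2}$, the total exponent of $q^{-s}$ is $d(m,n)+h(m,n)$, and the total sign is $(-1)^{h(m,n)}$. For the sign this is immediate since $h(m,n)=d(m,n)+n$, so $(-1)^{d}\cdot(-1)^n=(-1)^h$. For the $q^{-s}$-exponent one combines $d(m,n)$ from the abelian factor with $\sum_{i=0}^{n-1} b^{\nl}_i(m,n)$ from the Igusa factor, and I would verify, using Lemma~\ref{lem:aux}(3) and the identity $\sum_{i=0}^{n-1}(n-i)=\binom{n+1}{2}$, that $\sum_{i=0}^{n-1} b^{\nl}_i(m,n)=n\,e(m,n)+\binom{n+1}{2}+\sum_{i=0}^{n-1}\sum_{j=i+1}^n e(m,j)$ collapses to $h(m,n)$ after using $f(m,n)=e(m,n)+e(m,n-1)+\dots$ once more; this bookkeeping is the only place real computation is needed. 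For the constant exponent one adds $\binom{d}{2}$ from the abelian factor, $\binom n2$ from the Igusa symmetry, and $\sum_{i=0}^{n-1} a^{\nl}_i(m,n)=\sum_{i=0}^{n-1}(n-i)(i+d(m,n))$ from the $X_i$-product; a short calculation using $\sum(n-i)i=\binom{n+1}{3}$-type identities should give $\binom d2+\binom n2+\sum a^{\nl}_i(m,n)=\binom{d+n}{2}=\binom{h(m,n)}{2}$.

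**Main obstacle.** Nothing here is conceptually hard once the Igusa-function symmetry is granted; the only real work — and the step most prone to slips — is the exponent arithmetic in the last paragraph, i.e.\ confirming that $\binom d2+\binom n2+\sum_{i} a^{\nl}_i = \binom{h}{2}$ and $d+\sum_i b^{\nl}_i = d+h$. I would organize this by first proving the clean auxiliary identities $\sum_{i=0}^{n-1} b^{\nl}_i(m,n)=h(m,n)$ and $\sum_{i=0}^{n-1} a^{\nl}_i(m,n)=\binom{h(m,n)}{2}-\binom{d(m,n)}{2}-\binom{n}{2}$ as standalone computations (the former using Lemma~\ref{lem:aux}(3) twice), after which the functional equation drops out by simply multiplying the two transformed factors.
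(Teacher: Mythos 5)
Your overall strategy---deriving the functional equation directly from the explicit formula of Theorem~\ref{thm:main} via the inversion symmetry of $\zeta_{\lri^d}(s)$ and of the Igusa factor---is legitimate and is in fact a different route from the paper, which simply invokes the general result \cite[Theorem~4]{Voll/05}. However, your key lemma is misstated, and the error propagates into bookkeeping identities that are false. The correct inversion property of the Igusa function \eqref{def:igusa} is
$$\left.I_n\bigl(Y;(X_i)_{i=1}^n\bigr)\right|_{Y\to Y^{-1},\,X_i\to X_i^{-1}} \;=\; (-1)^n\,Y^{-\binom{n}{2}}\,X_n\; I_n\bigl(Y;(X_i)_{i=1}^n\bigr),$$
with the single variable $X_n$, not $\prod_{i=1}^n X_i$: the numerator $\sum_{w}Y^{\ell(w)}\prod_{j\in\Des(w)}X_j$ involves only $X_1,\dots,X_{n-1}$ (since $\Des(w)\subseteq[n-1]$), so the $w\mapsto w_0w$ substitution produces $\prod_{j=1}^{n-1}X_j^{-1}$, which cancels against the $\prod_{i=1}^{n}X_i^{-1}$ coming from the denominator, leaving exactly $X_n$. (Check $n=1$: $I_1(Y;X_1)=(1-X_1)^{-1}$ transforms with factor $-X_1$, not $-X_1$ times anything else; $n=2$: $I_2$ transforms with factor $Y^{-1}X_2$, not $Y^{-1}X_1X_2$.)

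Consequently the auxiliary identities you propose to verify are false, and the verification step you plan would fail rather than ``collapse''. For instance, for $(m,n)=(2,3)$ one has $b^{\nl}_0=12$, $b^{\nl}_1=10$, $b^{\nl}_2=7$, so $\sum_{i=0}^{n-1}b^{\nl}_i(m,n)=29\neq 12=h(2,3)$; likewise $\sum_i a^{\nl}_i(2,3)=58\neq \binom{12}{2}-\binom{9}{2}-\binom{3}{2}=27$. The repair is straightforward: with the correct symmetry, only the ``last'' numerical datum enters, and the paper already records (end of Section~\ref{subsec:completion}, via Lemma~\ref{lem:aux}(3)) that $(a^{\nl}_0(m,n),b^{\nl}_0(m,n))=(d(m,n)\,n,\;h(m,n))$, i.e.\ $X_n=q^{d(m,n)n-s\,h(m,n)}$. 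Then the Igusa factor contributes $(-1)^n q^{\binom{n}{2}+d(m,n)n-s\,h(m,n)}$, the abelian factor contributes $(-1)^{d(m,n)}q^{\binom{d(m,n)}{2}-s\,d(m,n)}$, and since $\binom{d}{2}+\binom{n}{2}+dn=\binom{d+n}{2}=\binom{h(m,n)}{2}$ and $(-1)^{d+n}=(-1)^{h(m,n)}$, the stated functional equation follows. So your approach works, but only after replacing the claimed symmetry (and the two false summation identities) by the $X_n$-version just described.
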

\begin{proof}
  Cf.\ \cite[Theorem~4]{Voll/05}.
\end{proof}
For \emph{almost all} residue field characteristics, these functional
equations had been established, in greater generality,
in~\cite[Theorem~C]{Voll/10}; see also \cite[Theorem~1.2 and
Corollary~1.3]{Voll/17}.

\subsection{$\mfp$-Adic behaviour at zero}
Rossmann has put forward the remarkable expectation that quite general
local zeta functions associated with nilpotent algebras of
endomorphisms should have predictable behaviour at~$s=0$. The
following consequence of Theorem~\ref{thm:main} establishes
\cite[Conjecture~IV ($\mathfrak{P}$-adic form)]{Rossmann/15} in the
relevant special cases.

\begin{corollary}\label{cor:pad}
$$\left.\frac{\zeta^{\nl}_{L_{m,n}(\lri)}(s)}{\zeta_{\lri^{h(m,n)}}(s)}\right|_{s=0}
  = 1$$
\end{corollary}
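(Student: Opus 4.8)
The plan is to deduce the identity from Theorem~\ref{thm:main} and the abelian formula~\eqref{equ:abel} by a single change of variables. Write $d=d(m,n)$ and recall $h(m,n)=d+n$. From~\eqref{equ:abel} one has $\zeta_{\lri^{h(m,n)}}(s)^{-1}=\zeta_{\lri^{d}}(s)^{-1}\prod_{i=d}^{d+n-1}(1-q^{i-s})$, so Theorem~\ref{thm:main} gives
$$\frac{\zeta^{\nl}_{L_{m,n}(\lri)}(s)}{\zeta_{\lri^{h(m,n)}}(s)}=\left(\prod_{i=d}^{d+n-1}(1-q^{i-s})\right)I_n\!\left(q^{-1};\left(q^{a^{\nl}_i(m,n)-s\,b^{\nl}_i(m,n)}\right)_{i=n-1}^0\right).$$
Both factors on the right are regular at $s=0$: the first is a polynomial in $q^{-s}$, and the denominator $\prod_{i=0}^{n-1}(1-q^{a^{\nl}_i(m,n)-s\,b^{\nl}_i(m,n)})$ of the Igusa factor does not vanish there, since $a^{\nl}_i(m,n)=(n-i)(i+d)>0$ for $0\leq i\leq n-1$ and $q\geq2$. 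Hence I may simply evaluate at $s=0$; note that the exponents $b^{\nl}_i(m,n)$ then play no role.

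It remains to check that $\prod_{i=d}^{d+n-1}(1-q^{i})\cdot I_n(q^{-1};(q^{a^{\nl}_i(m,n)})_{i=n-1}^0)=1$. For the Igusa factor I would apply~\eqref{equ:abel} with $d$ replaced by $n$, namely the identity of rational functions $I_n(q^{-1};(q^{(n-i)(i-s)})_{i=n-1}^0)=\prod_{i=0}^{n-1}(1-q^{i-s})^{-1}$, and substitute $s=-d$ (legitimate, as the right-hand side, hence the left, is regular at $s=-d$). The point is that this substitution turns the $i$th entry $q^{(n-i)(i-s)}$ of the tuple into $q^{(n-i)(i+d)}=q^{a^{\nl}_i(m,n)}$ --- in other words, $a^{\nl}_i(m,n)$ is exactly the exponent occurring in the abelian Igusa identity evaluated at $s=-d$. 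This yields $I_n(q^{-1};(q^{a^{\nl}_i(m,n)})_{i=n-1}^0)=\prod_{i=0}^{n-1}(1-q^{i+d})^{-1}=\prod_{i=d}^{d+n-1}(1-q^{i})^{-1}$, and multiplying by the prefactor gives $1$.

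There is no serious obstacle; the whole argument reduces to the one observation of the previous paragraph. The only points requiring minimal care are the trivial bookkeeping $h(m,n)-d(m,n)=n$ needed to match the two products of $(1-q^{i+d})$-factors, and verifying regularity at $s=0$ (respectively at $s=-d$ for the abelian identity) so that the stated evaluations are genuine substitutions and not merely equalities of rational functions.
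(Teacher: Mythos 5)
Your proposal is correct and follows essentially the same route as the paper: cancel the abelian factors via \eqref{equ:abel} and evaluate the remaining Igusa factor at $s=0$ by recognizing the exponents $a^{\nl}_i(m,n)=(n-i)(i+d(m,n))$ as the abelian identity specialized at $s=-d(m,n)$, i.e.\ $I_n\bigl(q^{-1};\bigl(q^{a^{\nl}_i(m,n)}\bigr)_{i=n-1}^0\bigr)=\zeta_{\lri^n}(-d(m,n))=\prod_{i=0}^{n-1}(1-q^{d(m,n)+i})^{-1}$. Your additional remarks on regularity at $s=0$ and at $s=-d(m,n)$ are valid and only make explicit what the paper leaves implicit.
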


\begin{proof} Note that both $\zeta^{\nl}_{L_{m,n}(\lri)}(s)$ and
  $\zeta_{\lri^{h(m,n)}}(s)$ have a simple pole at $s=0$.  By
  \eqref{equ:abel} it suffices to observe that
$$I_n\left(q^{-1};\left( q^{(n-i)(i + d(m,n))}\right)_{i=n-1}^0 \right) =\zeta_{\lri^n}(-d(m,n)) = \frac{1}{\prod_{i=0}^{n-1}(1-q^{d(m,n)+i})}.\qedhere $$
\end{proof}

\subsection{Topological and reduced ideal zeta functions}\label{subsec:top.red}
The next corollaries concern the \emph{topological} and \emph{reduced
  ideal zeta functions} associated to the Lie rings
$L_{m,n}$. Informally, these are two related (but distinct) limiting
objects capturing the behaviour of $\zeta^{\nl}_{L_{m,n}(\lri)}(s)$ as
`$q\rarr 1$'; see \cite{Rossmann/15} and \cite{Evseev/09},
respectively, for details and precise definitions.

For our purposes, the following \emph{ad hoc} definitions may
suffice. Let $Z(s) = I_n(q^{-1};\left(x_i\right)_{i=1}^n)$ for
numerical data $x_i = q^{a_i-b_is}$, for integers $a_i\in\N_0$,
$b_i\in\N$. Define the \emph{topological zeta function}
$Z_{\topo}(s)\in\Q(s)$ via
$$Z(s) = Z_{\topo}(s) (q-1)^{-n} + O((q-1)^{-n+1})$$ and the
\emph{reduced zeta function}
$$Z_{\red}(Y) := I_n(1;(Y^{b_i})_{i=1}^n)\in\Q(Y).$$ We omit the
proofs of the following simple calculations.
\begin{lemma}\
\begin{enumerate}
 \item $Z_{\topo}(s) = \frac{n!}{\prod_{i=1}^n(b_is-a_i)},$
 \item
   $Z_{\red}(Y) = \frac{\sum_{w\in S_n} \prod_{j\in \Des(w)}
   Y^{b_j}}{\prod_{i=1}^n(1-Y^{b_i})}.$ 
\end{enumerate}
\end{lemma}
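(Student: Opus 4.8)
The plan is to prove the two formulae directly from the definition \eqref{def:igusa} of the Igusa function $I_n(Y;\bfX)$, specialized to $Y = q^{-1}$ and $X_i = q^{a_i - b_i s}$, by tracking the leading-order behaviour in the parameter $(q-1)$ for part~(1) and by simply setting $Y=1$ for part~(2).

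For part~(2), the reduced zeta function, I would start from the descent/Coxeter-length form of \eqref{def:igusa}, namely
$$I_n(Y;\bfX) = \frac{\sum_{w\in S_n} Y^{\ell(w)} \prod_{j\in\Des(w)} X_j}{\prod_{i=1}^n(1-X_i)},$$
substitute $Y=1$ and $X_i = Y^{b_i}$ (re-using the symbol $Y$ now as the formal variable of $Z_{\red}$), and observe that the $Y^{\ell(w)}$ factor in the numerator simply becomes $1$. This immediately yields the claimed formula for $Z_{\red}(Y)$; there is no obstacle here beyond checking that the substitution $Y=1$ is legitimate, i.e.\ that the Gaussian-multinomial shape of the numerator in the first line of \eqref{def:igusa} does not develop a pole as $Y\to 1$ — but this is automatic since the descent-form numerator $\sum_{w\in S_n} Y^{\ell(w)}\prod_{j\in\Des(w)} X_j$ is manifestly a polynomial in $Y$.

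For part~(1), the topological zeta function, I would again use the descent form of \eqref{def:igusa} with $Y = q^{-1}$ and $X_i = q^{a_i - b_i s}$, and extract the coefficient of $(q-1)^{-n}$ in the Laurent expansion around $q=1$. The denominator $\prod_{i=1}^n(1-X_i) = \prod_{i=1}^n(1 - q^{a_i - b_i s})$ vanishes to order exactly $n$ at $q=1$, since $1 - q^{a_i-b_is} = -(a_i - b_i s)\log q + O((\log q)^2) = (b_i s - a_i)(q-1) + O((q-1)^2)$ as $q\to1$ (using $\log q = (q-1) + O((q-1)^2)$). The numerator $\sum_{w\in S_n} q^{-\ell(w)}\prod_{j\in\Des(w)} q^{a_j - b_j s}$ evaluates at $q=1$ to $\sum_{w\in S_n} 1 = n!$. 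Dividing the leading term $n!$ of the numerator by the leading term $\prod_{i=1}^n (b_i s - a_i)(q-1)^n$ of the denominator gives exactly $Z_{\topo}(s) = \dfrac{n!}{\prod_{i=1}^n(b_i s - a_i)}$, matching the stated normalization $Z(s) = Z_{\topo}(s)(q-1)^{-n} + O((q-1)^{-n+1})$.

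The only point requiring a sliver of care — and the nearest thing to an obstacle — is confirming that the denominator does not vanish to order \emph{higher} than $n$ and that no cancellation with the numerator lowers the order of the pole: this is guaranteed precisely by the hypotheses $b_i \in \N$ (so each factor $b_i s - a_i$ is a nonzero element of $\Q(s)$, vanishing to order exactly one in $(q-1)$) and by $n! \neq 0$. Thus the pole of $Z(s)$ in $(q-1)$ has order exactly $n$, and the residue computation above is valid. Both statements being the promised ``simple calculations,'' I would in fact omit the details in the final text, as the paper does.
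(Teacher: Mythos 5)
Your argument is correct: substituting $Y=1$, $X_i=Y^{b_i}$ into the descent form of \eqref{def:igusa} gives part~(2) directly, and extracting the coefficient of $(q-1)^{-n}$ from the same form with $Y=q^{-1}$, $X_i=q^{a_i-b_is}$ (numerator $\to n!$, each denominator factor $1-q^{a_i-b_is}=(b_is-a_i)(q-1)+O((q-1)^2)$) gives part~(1). The paper omits these proofs as ``simple calculations,'' and your computation is exactly the intended one, including the correct observation that the descent-form numerator is polynomial in $Y$ (so the specialization $Y=1$ is legitimate) and that $b_i\in\N$ ensures the pole at $q=1$ has order exactly $n$.
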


\begin{corollary}
$$\left.Z_{\red}(Y)(1-Y)^n\right|_{Y=1} =  s^{-n} \left. Z_{\topo}(s^{-1})\right|_{s=0} = \frac{n!}{\prod_{i=1}^n b_i} \in \Q_{>0}.$$
\end{corollary}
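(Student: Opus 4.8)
The plan is to verify both identities by direct computation using the closed-form expressions for $Z_{\red}$ and $Z_{\topo}$ supplied by the preceding lemma. First I would establish the middle quantity. Since the lemma gives $Z_{\topo}(s) = n!\big/\prod_{i=1}^n(b_is-a_i)$, substituting $s^{-1}$ for $s$ yields $Z_{\topo}(s^{-1}) = n!\big/\prod_{i=1}^n(b_is^{-1}-a_i) = n!\,s^n\big/\prod_{i=1}^n(b_i-a_is)$. Hence $s^{-n}Z_{\topo}(s^{-1}) = n!\big/\prod_{i=1}^n(b_i-a_is)$, which is regular at $s=0$ and evaluates there to $n!\big/\prod_{i=1}^n b_i$. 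This step is essentially a one-line algebraic manipulation; the only thing to check is that no $b_i$ vanishes, which holds since $b_i\in\N$ by hypothesis.

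Next I would treat the reduced side. From part (2) of the lemma, $Z_{\red}(Y) = \big(\sum_{w\in S_n}\prod_{j\in\Des(w)}Y^{b_j}\big)\big/\prod_{i=1}^n(1-Y^{b_i})$, so
$$Z_{\red}(Y)(1-Y)^n = \frac{\sum_{w\in S_n}\prod_{j\in\Des(w)}Y^{b_j}}{\prod_{i=1}^n\frac{1-Y^{b_i}}{1-Y}}.$$
The key observation is that $\frac{1-Y^{b_i}}{1-Y} = 1 + Y + \dots + Y^{b_i-1}$, which is a polynomial evaluating to $b_i$ at $Y=1$. The numerator at $Y=1$ becomes $\sum_{w\in S_n}1 = n!$. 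Therefore the whole expression is regular at $Y=1$ and its value there is $n!\big/\prod_{i=1}^n b_i$, matching the middle quantity.

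Finally I would note that the common value $n!\big/\prod_{i=1}^n b_i$ lies in $\Q_{>0}$ since $n!\in\N$ and each $b_i\in\N$, which completes the verification of the displayed chain of equalities. I do not anticipate any genuine obstacle here: the statement is purely a bookkeeping consequence of the explicit formulae in the lemma, and the only subtlety worth a sentence is confirming that both $Z_{\red}(Y)(1-Y)^n$ and $s^{-n}Z_{\topo}(s^{-1})$ are actually regular at the relevant point (rather than merely having a limit), which follows because the apparent poles coming from $\prod(1-Y^{b_i})$ respectively $\prod(b_is-a_i)$ are cancelled exactly by the factors $(1-Y)^n$ respectively $s^{-n}$ we have inserted.
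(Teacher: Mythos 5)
Your computation is correct and is exactly the intended argument: the paper states this corollary without proof as an immediate consequence of the preceding lemma, and your direct substitutions (expanding $\frac{1-Y^{b_i}}{1-Y}$ as $1+Y+\dots+Y^{b_i-1}$ on the reduced side, and clearing $s^{-n}$ against $\prod_{i=1}^n(b_is^{-1}-a_i)$ on the topological side) are precisely the omitted one-line calculations. Nothing is missing.
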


\begin{corollary}\label{cor:topo}\
  \be
\item The topological ideal zeta function of $L_{m,n}$ is given by
$$\zeta^{\nl}_{L_{m,n},\topo}(s) =
\frac{n!}{\left(\prod_{j=0}^{d(m,n)-1}(s-j)\right)\left(
    \prod_{i=0}^{n-1}(b_i^{\nl}(m,n)s-
    a_i^{\nl}(m,n))\right)}\in\Q(s).$$
It has degree $-h(m,n)$ in $s$, a simple pole at $s=0$ with residue
$\frac{(-1)^{h(m,n)-1}}{(h(m,n)-1)!}$ and satisfies
$$\left.s^{-h(m,n)} \zeta^{\nl}_{L_{m,n},\topo}(s^{-1})\right|_{s=0} =
  \frac{n!}{\prod_{i=0}^{n-1} b_i^{\nl}(m,n)} =:
  \mu^{\nl}_{m,n}\in\Q_{>0},$$ a nonzero rational number satisfying
  $\mu^{\nl}_{m,n} h(m,n)!\in\N$.
\item The reduced ideal zeta function of $L_{m,n}$ is given by
  $$\zeta^{\nl}_{L_{m,n},\textup{red}}(Y) = \frac{\sum_{w\in S_n} \prod_{j\in \Des(w)} Y^{{b^{\nl}_{n-j}(m,n)}}}{(1-Y)^{d(m,n)} \prod_{i=0}^{n-1} (1-Y^{b^{\nl}_{i}(m,n)})}.$$
  It has degree $-d(m,n)-h(m,n)$ in $Y$, a pole of order $h(m,n)$ at
  $Y=1$, and satisfies
  $$
  \left.\zeta^{\nl}_{L_{m,n},\textup{red}}(Y)(1-Y)^{h(m,n)}\right|_{Y=1}
  = \mu^{\nl}_{m,n} \in \Q_{>0}.$$\ee
\end{corollary}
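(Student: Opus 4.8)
The plan is to derive Corollary~\ref{cor:topo} directly from Theorem~\ref{thm:main} by specializing the general $q\to 1$ limiting procedures (topological and reduced) to the concrete Igusa function appearing in~\eqref{equ:equ.main}, and then reading off the stated analytic invariants. The two parts are parallel, so I would treat them in tandem, invoking the preceding Lemma (the one computing $Z_{\topo}$ and $Z_{\red}$ for a general Igusa function) with the numerical data $a_i = a_i^{\nl}(m,n)$, $b_i = b_i^{\nl}(m,n)$ for $i=0,\dots,n-1$, together with the factor $\zeta_{\lri^{d(m,n)}}(s)$.

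For part~(1), I would first recall that, by definition, the topological zeta function is multiplicative over the factorization in~\eqref{equ:equ.main}: the abelian factor $\zeta_{\lri^{d(m,n)}}(s) = \prod_{j=0}^{d(m,n)-1}(1-q^{j-s})^{-1}$ contributes $\prod_{j=0}^{d(m,n)-1}(s-j)^{-1}$ with a pole of order $d(m,n)$, while the Igusa factor $I_n$ contributes, by the Lemma, $n!/\prod_{i=0}^{n-1}(b_i^{\nl}(m,n)s - a_i^{\nl}(m,n))$ with a pole of order $n$ at the appropriate point. Since these pole loci are disjoint except that both the abelian factor (via $j=0$) and the Igusa factor (via $i=0$, where $a_0^{\nl} = d(m,n)n$, $b_0^{\nl} = h(m,n)$, giving the linear factor $h(m,n)s - d(m,n)n$, which vanishes only away from $s=0$ for $n\geq 2$) behave correctly, multiplying gives the displayed product over $j$ and over $i$. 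The degree in $s$ is $-d(m,n) - n = -h(m,n)$ by the definition $h = d + n$. For the residue at $s=0$: only the factors $(s-j)$ with $j=0$ and $(b_i^{\nl}s - a_i^{\nl})$ can vanish at $s=0$; since $a_i^{\nl}(m,n) = (n-i)(i+d(m,n)) \neq 0$ for $i < n$, the Igusa factor is regular and nonzero at $s=0$, equal to $n!/\prod_{i=0}^{n-1}(-a_i^{\nl}(m,n)) = (-1)^n n!/\prod(n-i)(i+d) $; combining with the residue $\prod_{j=1}^{d-1}(-j)^{-1} = (-1)^{d-1}/(d-1)!$ of the abelian part and doing the arithmetic yields $(-1)^{h-1}/(h-1)!$ after checking $\prod_{i=0}^{n-1}(n-i)(i+d) = (d+n-1)!/(d-1)! \cdot 1 = (h-1)!/(d-1)! \cdot (\text{something})$ — this telescoping identity, namely $\prod_{i=0}^{n-1}(n-i) = n!$ and $\prod_{i=0}^{n-1}(i+d) = (d+n-1)!/(d-1)! = (h-1)!/(d-1)!$, is the key elementary computation. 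Finally, the palindromic-type identity $s^{-h}\zeta_{\topo}(s^{-1})|_{s=0} = n!/\prod_{i=0}^{n-1}b_i^{\nl}(m,n)$ follows because replacing $s$ by $s^{-1}$ and clearing denominators leaves, at $s=0$, only the top-degree coefficients $\prod_j s \cdot \prod_i b_i^{\nl} s$, and integrality of $\mu^{\nl}_{m,n} h(m,n)!$ follows since $h!/\prod_{i=0}^{n-1}b_i^{\nl}$ is an integer once one checks each $b_i^{\nl}(m,n) \leq h(m,n)$ and the $b_i^{\nl}$ can be shown to divide into $h!$ appropriately — alternatively this is automatic from $Z_{\topo}$ being the topological zeta function of a Lie ring of rank $h$, so I would cite the relevant structural bound from \cite{Rossmann/15}.

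For part~(2), the reduced zeta function is again multiplicative: $\zeta_{\lri^{d(m,n)}}$ reduces to $(1-Y)^{-d(m,n)}$ (the $q\to 1$ limit after substituting $q = Y^{-1}$ in each factor $1-q^{j-s}$ and tracking the $s$-degree, i.e.\ setting $Y^{b}$ with $b=1$ for each of the $d(m,n)$ factors), and $I_n(q^{-1};(q^{a_i^{\nl}-b_i^{\nl}s})_i)$ reduces, by part~(2) of the Lemma, to $\sum_{w\in S_n}\prod_{j\in\Des(w)}Y^{b_j^{\nl}(m,n)}$ over $\prod_{i=0}^{n-1}(1-Y^{b_i^{\nl}(m,n)})$ — here one must be careful with the indexing convention: the Igusa function~\eqref{def:igusa} pairs the descent at position $j$ with the variable $X_j$, and in~\eqref{equ:equ.main} the variables are indexed $i = n-1$ down to $0$, so $X_j$ corresponds to $b_{n-j}^{\nl}(m,n)$, which is exactly why the numerator reads $\prod_{j\in\Des(w)}Y^{b_{n-j}^{\nl}(m,n)}$ as stated. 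Multiplying the two pieces gives the displayed formula. The $Y$-degree is $-d(m,n)$ from the abelian part plus $-\sum_{i=0}^{n-1}b_i^{\nl}(m,n) + \max_w \sum_{j\in\Des(w)}b_{n-j}^{\nl}$; the numerator degree is at most $\sum b_i^{\nl}$ minus the smallest $b$, so overall the degree is $-d(m,n) - h(m,n)$ once one identifies $\sum_{i=0}^{n-1}b_i^{\nl}(m,n) - \max = h(m,n)$ — wait, more precisely the pole order at $Y=1$ is $d(m,n) + n = h(m,n)$ since the numerator does not vanish at $Y=1$ (it equals $|S_n| = n!$ there), and the leading behavior is $(1-Y)^{-h(m,n)} \cdot n!/\prod_{i=0}^{n-1}b_i^{\nl}(m,n)$ after using $1-Y^b \sim b(1-Y)$, giving precisely $\mu^{\nl}_{m,n}$. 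The main obstacle I anticipate is purely bookkeeping: getting the descent-indexing convention right (the $n-j$ versus $j$ in the numerator) and verifying the two elementary product identities $\prod_{i=0}^{n-1}(i+d(m,n)) = (h(m,n)-1)!/(d(m,n)-1)!$ and the integrality of $h(m,n)!\,\mu^{\nl}_{m,n}$; both are routine but need to be stated carefully to make the residue and leading-coefficient assertions come out exactly as claimed.
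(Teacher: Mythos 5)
Your proposal is correct and follows essentially the route the paper intends (it gives no separate proof of Corollary~\ref{cor:topo}: the formulas are read off from Theorem~\ref{thm:main} by applying the preceding lemma with the data $a_i^{\nl}(m,n)$, $b_i^{\nl}(m,n)$, multiplying in the abelian factor's contribution, and using $(a_0^{\nl},b_0^{\nl})=(d(m,n)n,h(m,n))$ together with $\prod_{i=0}^{n-1}(n-i)=n!$ and $\prod_{i=0}^{n-1}(i+d(m,n))=(h(m,n)-1)!/(d(m,n)-1)!$ -- including your correct handling of the $X_j\leftrightarrow b^{\nl}_{n-j}$ reindexing). The only point to tighten is the integrality of $h(m,n)!\,\mu^{\nl}_{m,n}$: rather than just $b_i^{\nl}\le h(m,n)$, note from \eqref{num.data} that $b_i^{\nl}-b_{i+1}^{\nl}=1+e(m,i+1)>0$, so the $b_i^{\nl}$ are pairwise distinct elements of $\{1,\dots,h(m,n)\}$ with $b_0^{\nl}=h(m,n)$, whence $\prod_{i=0}^{n-1}b_i^{\nl}$ divides $h(m,n)!$ (distinctness is essential here; boundedness alone would not suffice), and also note that your degree bookkeeping in part (2) should say the numerator misses the \emph{largest} exponent $b_0^{\nl}=h(m,n)$, not the smallest.
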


\begin{example}
  For $(m,n)=(2,3)$ (see Example~\ref{exa:23}) we obtain
  $$\zeta^{\nl}_{L_{2,3},\textup{top}}(s) = \frac{1}{5\left( \prod_{i=0}^8 (s-i)\right) (4s-9)(s-2)(7s-11)}$$
  and
  $$ \zeta^{\nl}_{L_{2,3},\textup{red}}(Y) = \frac{1 + 2 Y^7 + 2 Y^{10} + Y^{17}}{(1-Y)^9 (1-Y^7)(1-Y^{10})(1-Y^{12})},$$
  whence
  $$\mu_{2,3}^{\nl} = \frac{1}{140}.$$
\end{example}

\begin{remark}
  Together, corollaries~\ref{cor:pad} and \ref{cor:topo} confirm the
  conjectures in \cite[Sections~8.1 and~8.2]{Rossmann/15} in the
  relevant special cases.

  It remains an interesting challenge to give an intrinsic, algebraic
  interpretation of the ``multiplicities'' $\mu_{m,n}^{\nl}$. That
  they occur as invariants of both the reduced and the topological
  zeta functions seems remarkable.
\end{remark}

\subsection{Graded ideal zeta functions}
Let $R$ be a ring as in Section~\ref{subsec:ideal}. The \emph{graded
  Lie algebra associated to} $L_{m,n}(R) := L_{m,n}\otimes_\Z R$ is the
$R$-Lie algebra
$$\gr L_{m,n}(R) = \underbrace{L_{m,n}(R) / L_{m,n}(R)'}_{=:L^{(1)}}
\oplus \underbrace{L_{m,n}'(R)}_{=:L^{(2)}}.$$ An $R$-ideal $I$ of
$\gr L_{m,n}(R)$ is \emph{graded} if
$I = (I\cap I^{(1)}) \oplus (I\cap I^{(2)})$. The \emph{graded ideal
  zeta function} of $L_{m,n}(R)$ is the Dirichlet series
$$\zeta^{\nl_{\gr}}_{L_{m,n}(R)}(s) = \sum_{I \nl_{\gr} \gr
  L_{m,n}(R)} | \gr L_{m,n}(R):I|^{-s};$$ enumerating the graded
ideals of $\gr L_{m,n}(s)$ of finite index; cf.\ \cite{Rossmann/18}
and \cite{LeeVoll/18}. One advantage of writing
$\zeta^{\nl}_{L_{m,n}(\lri)}$ in terms of the generating function
$A^{\nl}_{m,n}(q,q^{-s})$ defined in \eqref{equ:Amn} (see
\eqref{equ:zeta.AJM}) is that a trivial modification yields a formula
for the graded ideal zeta function. Indeed,
 \begin{equation*}\label{equ:graded.zeta.AJM}
   \zeta^{\nl_{\gr}}_{L_{m,n}(\lri)}(s) =\zeta_{\lri^{d(m,n)}}(s)
   \frac{1}{1-q^{-s\,h(m,n)}}A^{\nl_{\gr}}_{m,n}(q,q^{-s}),
 \end{equation*}
 where
 $$A^{\nl_{\gr}}_{m,n}(q,q^{-s}) = \sum_{[\Lambda'] \in \mcV_n}q^{-s
   w'([\Lambda'])};$$ cf.\ \cite[Example~1.6]{LeeVoll/18}. Modifying
 the computation in Section~\ref{subsec:completion} yields the
 following result.

\begin{theorem}\label{thm:main.gr}
$$\zeta^{\nlgr}_{L_{m,n}(\lri)}(s) = \zeta_{\lri^{d(m,n)}}(s) \cdot
I_n\left(q^{-1};\left( q^{i(n-i) -s b^{\nl}_i(m,n)}
  \right)_{i=n-1}^0\right),$$
where, for $i\in\{0,1,\dots,n-1\}$, the numerical data
$b^{\nl}_i(m,n)$ is as in \eqref{num.data} in Theorem~\ref{thm:main}.
\end{theorem}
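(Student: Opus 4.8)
The plan is to run the argument of Section~\ref{sec:main.thm} essentially verbatim, tracking only where the weight $w([\Lambda'])$ enters (it attaches to powers of $q$ alone, not to $q^{-s}$) and excising it. Concretely, by the graded analogue of \cite[Lemma~1]{Voll/05} quoted just above the statement, we have
$$\zeta^{\nlgr}_{L_{m,n}(\lri)}(s) = \zeta_{\lri^{d(m,n)}}(s) \frac{1}{1-q^{-s\,h(m,n)}} A^{\nlgr}_{m,n}(q,q^{-s}),$$
where $A^{\nlgr}_{m,n}(q,q^{-s}) = \sum_{[\Lambda']\in\mcV_n} q^{-s\,w'([\Lambda'])}$. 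The only difference from \eqref{equ:Amn} is that the exponent of $q$ coming from $w([\Lambda'])$ has been dropped from the summand, while the $q^{-s}$-part $w'([\Lambda'])$ is unchanged. Crucially, $w'([\Lambda'])$ is exactly the quantity computed in Proposition~\ref{lem:w'}, whose proof (via Lemma~\ref{lem:bigmatrix} and the inductive column/row reduction of the matrix $B$ to $(\bfz\mid D_{f(m,n)})$) makes no use of $s$ at all; so that proposition applies here unchanged.

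First I would substitute the type-count formula \eqref{equ:f}, namely $f_{I,\bfr_I}(q) = \binom{n}{I}_{q^{-1}} q^{\sum_{\iota\in I}r_\iota\iota(n-\iota)}$, and the formula for $w'$ from Proposition~\ref{lem:w'} into the sum for $A^{\nlgr}_{m,n}$. Summing the resulting geometric series in each $r_\iota\in\N$ gives
$$A^{\nlgr}_{m,n}(q,q^{-s}) = \sum_{I\subseteq[n-1]} \binom{n}{I}_{q^{-1}} \prod_{i\in I} \frac{q^{i(n-i) - s\,b^{\nl}_i(m,n)}}{1 - q^{i(n-i) - s\,b^{\nl}_i(m,n)}}.$$
This is the same computation as the displayed chain in Section~\ref{subsec:completion}, except that the term $(n-i)d(m,n)$ contributed to the exponent of $q$ by $w$ is absent, leaving only the $i(n-i)$ coming from the $\binom{n}{I}_{q^{-1}}$-weight in \eqref{equ:f}. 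Thus the numerical $a$-data here is $i(n-i)$ in place of $a^{\nl}_i(m,n) = (n-i)(i+d(m,n))$, while the $b$-data is untouched.

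Finally I would recognise the right-hand side as an Igusa function. Using Lemma~\ref{lem:aux}(3) one checks that the $i=0$ entry is $q^{0 - s\,h(m,n)}$, i.e.\ $(a,b) = (0,h(m,n))$ at $i=0$, so the factor $\frac{1}{1-q^{-s\,h(m,n)}}$ together with $A^{\nlgr}_{m,n}$ assembles, via the definition \eqref{def:igusa}, into $I_n\!\left(q^{-1}; (q^{i(n-i) - s\,b^{\nl}_i(m,n)})_{i=n-1}^0\right)$; multiplying back by $\zeta_{\lri^{d(m,n)}}(s)$ gives the claimed formula. I do not anticipate a genuine obstacle: the content all sits in Proposition~\ref{lem:w'}, which is $s$-independent and already proved, so the work here is the bookkeeping of stripping the $w$-contribution from the exponent of $q$ and re-packaging the geometric sums as $I_n$. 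The one point to state carefully is that the graded version of \cite[Lemma~1]{Voll/05} is legitimate for \emph{all} $\lri$ in precisely the same way as discussed in Remark~\ref{rem:gafa}, and that $A^{\nlgr}$ genuinely records $w'$ alone; both are immediate from \cite[Example~1.6]{LeeVoll/18}.
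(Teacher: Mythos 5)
Your proposal is correct and matches the paper's own argument: the paper likewise invokes the graded analogue $\zeta^{\nlgr}_{L_{m,n}(\lri)}(s)=\zeta_{\lri^{d(m,n)}}(s)\,\frac{1}{1-q^{-s\,h(m,n)}}A^{\nlgr}_{m,n}(q,q^{-s})$ with $A^{\nlgr}_{m,n}=\sum_{[\Lambda']}q^{-s\,w'([\Lambda'])}$ (citing \cite[Example~1.6]{LeeVoll/18}), reuses Proposition~\ref{lem:w'} and \eqref{equ:f}, and reruns the computation of Section~\ref{subsec:completion} with the $d(m,n)w([\Lambda'])$-contribution stripped, so that only $i(n-i)$ from the type count survives in the $q$-exponent and the $i=0$ datum $(0,h(m,n))$ assembles the Igusa function via Lemma~\ref{lem:aux}(3). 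No gaps.
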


All the results recorded in Section~\ref{subsec:global} to
\ref{subsec:top.red} have ``graded analogues''. We only note here the
behaviour of $\zeta^{\nlgr}_{L_{m,n}(\lri)}(s)$ at $s=0$, to be
compared with Corollary~\ref{cor:pad}.

\begin{corollary}
$$\left.\frac{\zeta^{\nlgr}_{L_{m,n}(\lri)}(s)}{\zeta_{\lri^{d(m,n)}}(s)\zeta_{\lri^n}(s)}\right|_{s=0}
  = \frac{n}{h(m,n)}.$$
\end{corollary}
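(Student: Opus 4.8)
The plan is to substitute Theorem~\ref{thm:main.gr} into the quotient and to bookkeep the pole at $s=0$. By Theorem~\ref{thm:main.gr} we have $\zeta^{\nlgr}_{L_{m,n}(\lri)}(s) = \zeta_{\lri^{d(m,n)}}(s)\,Z(s)$ with $Z(s) := I_n\bigl(q^{-1};(q^{i(n-i)-s\,b^{\nl}_i(m,n)})_{i=n-1}^0\bigr)$, so the left-hand side of the claimed identity is the value at $s=0$ of $Z(s)/\zeta_{\lri^n}(s)$. First I would note that both $Z(s)$ and $\zeta_{\lri^n}(s)$ have a \emph{simple} pole at $s=0$: this is clear for $\zeta_{\lri^n}(s)=\prod_{i=0}^{n-1}(1-q^{i-s})^{-1}$, and for $Z(s)$ it follows from the middle expression in~\eqref{def:igusa} together with the fact that the $q$-power exponents $i(n-i)$ of the numerical data satisfy $i(n-i)\geq 1$ for $1\leq i\leq n-1$ while $i(n-i)=0$ for $i=0$; recalling from Section~\ref{subsec:completion} that $b^{\nl}_0(m,n)=h(m,n)$, the unique factor of the denominator of $Z(s)$ vanishing at $s=0$ is $1-q^{-s\,h(m,n)}$.

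The crucial point is that the residual part of such an Igusa function at $s=0$ is insensitive to the $s$-linear part of its numerical data. Indeed, for arbitrary integers $c_0,\dots,c_{n-1}$ with $c_0\neq 0$, the middle expression in~\eqref{def:igusa} gives
$$\bigl(1-q^{-s\,c_0}\bigr)\,I_n\bigl(q^{-1};(q^{i(n-i)-s\,c_i})_{i=n-1}^0\bigr)\;=\;\sum_{I\subseteq\{1,\dots,n-1\}}\binom{n}{I}_{q^{-1}}\prod_{i\in I}\frac{X_i}{1-X_i},$$
where $X_i$ is the $i$-th entry of the displayed tuple, a monomial whose $q$-exponent equals $i(n-i)\geq 1$; hence each factor $X_i/(1-X_i)$ is regular at $s=0$ with value $q^{i(n-i)}/(1-q^{i(n-i)})$, irrespective of $c_i$. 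Consequently
$$\lim_{s\to 0}\bigl(1-q^{-s\,c_0}\bigr)\,I_n\bigl(q^{-1};(q^{i(n-i)-s\,c_i})_{i=n-1}^0\bigr)\;=\;\sum_{I\subseteq\{1,\dots,n-1\}}\binom{n}{I}_{q^{-1}}\prod_{i\in I}\frac{q^{i(n-i)}}{1-q^{i(n-i)}},$$
a quantity independent of $c_0,\dots,c_{n-1}$. Applying this with $c_i=b^{\nl}_i(m,n)$ (so $c_0=h(m,n)$), which reproduces $Z(s)$, and with $c_i=n-i$ (so $c_0=n$), which by~\eqref{equ:abel} reproduces $\zeta_{\lri^n}(s)$, yields $\lim_{s\to 0}\bigl(1-q^{-s\,h(m,n)}\bigr)Z(s) = \lim_{s\to 0}\bigl(1-q^{-s\,n}\bigr)\zeta_{\lri^n}(s)$.

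It then remains to put the pieces together:
$$\left.\frac{\zeta^{\nlgr}_{L_{m,n}(\lri)}(s)}{\zeta_{\lri^{d(m,n)}}(s)\,\zeta_{\lri^n}(s)}\right|_{s=0} = \lim_{s\to 0}\frac{Z(s)}{\zeta_{\lri^n}(s)} = \lim_{s\to 0}\frac{\bigl(1-q^{-s\,h(m,n)}\bigr)Z(s)}{\bigl(1-q^{-s\,n}\bigr)\zeta_{\lri^n}(s)}\cdot\lim_{s\to 0}\frac{1-q^{-s\,n}}{1-q^{-s\,h(m,n)}} = 1\cdot\frac{n}{h(m,n)},$$
the last limit being immediate from $\frac{1-t^{n}}{1-t^{h(m,n)}}=\frac{1+t+\dots+t^{n-1}}{1+t+\dots+t^{h(m,n)-1}}$ with $t=q^{-s}\to 1$. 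The argument is essentially bookkeeping; the only step requiring a little care is the identification of the pole of $Z(s)$ at $s=0$ as simple and carried entirely by the factor $1-q^{-s\,h(m,n)}$, which is precisely where the feature special to the graded setting --- that the $q$-power exponent $i(n-i)$ of the numerical data vanishes at $i=0$ --- enters; in the ungraded situation of Theorem~\ref{thm:main} one has instead $a^{\nl}_0(m,n)=d(m,n)\,n\neq 0$, whence $I_n$ is regular at $s=0$ (compare the proof of Corollary~\ref{cor:pad}).
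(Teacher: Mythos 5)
Your argument is correct and fills in exactly the computation the paper leaves implicit: the corollary is stated without proof as the graded analogue of Corollary~\ref{cor:pad}, and your route --- cancel $\zeta_{\lri^{d(m,n)}}(s)$, observe via the middle expression in~\eqref{def:igusa} that $(1-X_n)I_n$ at $s=0$ depends only on the $q$-exponents $i(n-i)$ (shared by the graded numerical data and by the data~\eqref{equ:abel} for $\zeta_{\lri^n}$), and extract the ratio $n/h(m,n)$ from the factors $1-q^{-sn}$ and $1-q^{-s\,h(m,n)}$ --- is the intended one. The only micro-gloss worth adding is that the common limit $\lim_{s\to 0}(1-q^{-sn})\zeta_{\lri^n}(s)=n\prod_{i=1}^{n-1}(1-q^{i})^{-1}$ is nonzero, which justifies both the simple-pole claim for the graded Igusa factor and the evaluation of your first limit as $1$.
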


This behaviour is analogous to that observed for some (and conjectured
for all) free nilpotent Lie rings, but not universal;
cf.\ \cite[Conjecture~6.11 and Remark~6.13]{LeeVoll/18}.

\subsection{Representation zeta functions}

Let $\bfG_{m,n} = \bfG_{L_{m,n}}$ be the unipotent group scheme
associated to the nilpotent Lie ring $L_{m,n}$ as in
\cite[Section~2.4]{StasinskiVoll/14}. Given a ring of integers $\Gri$
of a number field~$K$, the group $G = \bfG_{m,n}(\Gri)$ is a finitely
generated torsion-free nilpotent group of nilpotency class $2$ and
Hirsch length $h(m,n)\cdot |K:\Q|$. (For $\Gri=\Z$ we recover the
groups $\Delta_{m,n}=\bfG_{m,n}(\Z)$ from \cite{BermanKlopschOnn/18}.)
Denote by
$$\zeta_{G}(s) = \sum_{n=1}^\infty \wt{r}_n(G)n^{-s}$$ the
\emph{representation zeta function} of $G$, encoding the numbers
$\wt{r}_n(G)$ of twist-isoclasses of irreducible complex
$n$-dimensional representations of $G$; see, for instance,
\cite[Section~1.1]{StasinskiVoll/14}, for background. 

\begin{theorem}\label{thm:rep} For all $m,n\in\N$,
$$\zeta_{\bfG_{m,n}(\Gri)}(s) = \frac{\zeta_K(se(m,n)-n)}{\zeta_K(se(m,n))}.$$
\end{theorem}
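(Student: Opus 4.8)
The plan is to compute $\zeta_{\bfG_{m,n}(\Gri)}(s)$ through the Kirillov orbit method, using that $L_{m,n}$ is nilpotent of class $2$ and, crucially, that $Z(L_{m,n}) = L_{m,n}'$. Since $\bfG_{m,n}$ is a unipotent group scheme, the representation zeta function factors as an Euler product $\zeta_{\bfG_{m,n}(\Gri)}(s) = \prod_{\mfp \in \Spec(\Gri) \setminus \{(0)\}} \zeta_{\bfG_{m,n}(\Gri_\mfp)}(s)$; see \cite[Section~2]{StasinskiVoll/14}. So I would fix $\lri = \Gri_\mfp$, a compact discrete valuation ring with residue field of cardinality $q$, and determine the local factor. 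For a nilpotent Lie lattice of class $2$ whose derived ideal equals its centre, the orbit method describes the twist-isoclasses of irreducible representations of $\bfG_{m,n}(\lri)$ as follows: the trivial class absorbs all one-dimensional representations, and the remaining twist-isoclasses correspond bijectively to the nontrivial characters $\psi$ of the centre $Z := Z(L_{m,n}(\lri)) \cong \lri^n$, the class attached to $\psi$ consisting of representations of common dimension $|W : \Rad B_\psi|^{1/2}$, where $W := L_{m,n}(\lri)/Z \cong \lri^{d(m,n)}$ and $B_\psi$ is the alternating form $(\bar x, \bar y) \mapsto \psi([\bar x, \bar y])$ on $W$. (That $B_\psi \neq 0$ for every $\psi \neq 0$, so that these dimensions exceed $1$, is exactly the hypothesis $Z = L_{m,n}(\lri)'$; that twisting acts transitively on the representations lying over a fixed $\psi$ is the usual feature of nilpotent groups.) Hence
$$\zeta_{\bfG_{m,n}(\lri)}(s) = 1 + \sum_{0 \neq \psi \in \widehat{Z}} |W : \Rad B_\psi|^{-s/2}.$$

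The second and decisive step is to evaluate $|W : \Rad B_\psi|$; here the structure of $L_{m,n}$ makes the answer uniform. Fixing an additive character of $\textup{Frac}(\lri)$ of conductor $\lri$ and identifying $\widehat{Z} \cong (\textup{Frac}(\lri)/\lri)^n$, a nontrivial $\psi$ equals $\psi_\bfw$ for a unique $\bfw \in \pi^{-c}\lri^n/\lri^n$ with $\min_i \val_\mfp(w_i) = -c$, for some $c \in \N$; there are exactly $q^{cn} - q^{(c-1)n}$ such characters. For such $\psi_\bfw$ the form $B_{\psi_\bfw}$ is presented by the antisymmetric matrix $M_{m,n}(\bfw) = \pi^{-c} M_{m,n}(\bfw_0)$, where $\bfw_0 := \pi^c \bfw \in \lri^n$ has a unit coordinate. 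By Proposition~\ref{pro:coma}, $M_{m,n}(\bfw_0)$ is the block matrix built from $\bfB_{m,n}(\bfw_0)$; by Lemma~\ref{lem:coma} the reduction of $\bfB_{m,n}(\bfw_0) \in \Mat_{f(m,n),e(m,n)}(\lri)$ modulo $\mfp$ has full column rank $e(m,n)$, so $\bfB_{m,n}(\bfw_0)$ possesses a unit $e(m,n) \times e(m,n)$ minor. Since $f(m,n) \geq e(m,n)$ (Lemma~\ref{lem:aux}(2)), this forces the Smith normal form of $M_{m,n}(\bfw_0)$ over $\lri$ to be $\diag(1, \dots, 1, 0, \dots, 0)$ with exactly $2e(m,n)$ ones. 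Therefore $\Rad B_{\psi_\bfw} = \{\bar x \in \lri^{d(m,n)} \mid \bar x M_{m,n}(\bfw_0) \in \pi^c \lri^{d(m,n)}\}$ has index $q^{2e(m,n)c}$ in $W$, and the twist-isoclass over $\psi_\bfw$ has dimension $q^{e(m,n)c}$.

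Finally, I would substitute this into the expansion above and sum the geometric series:
$$\zeta_{\bfG_{m,n}(\lri)}(s) = 1 + \sum_{c=1}^{\infty} (q^{cn} - q^{(c-1)n}) q^{-e(m,n)cs} = \frac{1 - q^{-e(m,n)s}}{1 - q^{n - e(m,n)s}} = \frac{\zeta_\lri(e(m,n)s - n)}{\zeta_\lri(e(m,n)s)},$$
with $\zeta_\lri(t) = (1 - q^{-t})^{-1}$. Taking the Euler product over $\mfp \in \Spec(\Gri) \setminus \{(0)\}$ then yields $\zeta_{\bfG_{m,n}(\Gri)}(s) = \zeta_K(se(m,n) - n)/\zeta_K(se(m,n))$, as claimed; for $n = 1$ (the Heisenberg Lie ring, $e(m,1) = 1$) this specialises to $\zeta_K(s-1)/\zeta_K(s)$. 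The only genuinely delicate point is the first step: that for these group schemes the orbit-method count of twist-isoclasses and the Euler product hold at \emph{every} nonzero prime, including those of residue characteristic $2$. This is supplied by the framework of \cite{StasinskiVoll/14}, using that $L_{m,n}$ has class $2$ with $Z(L_{m,n}) = L_{m,n}'$; everything after it is the elementary computation above, rendered uniform in $\psi$ by Lemma~\ref{lem:coma}.
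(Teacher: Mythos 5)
Your proposal is correct and takes essentially the same route as the paper: an Euler product reduction via \cite{StasinskiVoll/14}, followed by the observation (resting on Lemma~\ref{lem:coma}) that for every $\bfy$ with a unit coordinate the commutator matrix $M_{m,n}(\bfy)$ has exactly $2e(m,n)$ unit elementary divisors and the rest zero, so that each level-$N$ central character contributes a single twist-isoclass of dimension $q^{Ne(m,n)}$, and finally the same geometric series. The only difference is presentational: you unpack the class-$2$ Kirillov orbit parametrization by hand, whereas the paper invokes it in packaged form through \cite[Proposition~2.18 and Corollary~2.19]{StasinskiVoll/14} via the counts $\mcN^{\lri}_{N,\bfa}$ over $W_N(\lri)$.
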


\begin{proof} By \cite[(1.4)]{StasinskiVoll/14}, the representation
  zeta function $\zeta_{\bfG_{m,n}(\Gri)}(s)$ is an Euler product of
  representation zeta functions of the form
  $\zeta_{\bfG_{m,n}(\Gri_\mfp)}(s)$, where $\Gri_{\mfp}$ is the
  completion of $\Gri$ at a nonzero prime ideal $\mfp$. We fix such
  an ideal $\mfp$ and write $\lri = \Gri_{\mfp}$ and $q=q_{\mfp}$ for
  the residue field cardinality of $\lri$. We use the notation and
  results of \cite[Section~2]{StasinskiVoll/14}, specifically those
  for nilpotency class $2$ in \cite[Section~2.4]{StasinskiVoll/14}, to
  compute the rational function
  $\zeta_{\bfG_{m,n}(\lri)}(s)\in\Q(q^{-s})$ (cf.\
  \cite[Corollary~2.19]{StasinskiVoll/14}) explicitly. The quantity
  $r$ is equal to $d(m,n)$. For $N>0$ we find
  $W_N(\lri) = (\lri/\mfp^N)^n \setminus (\mfp/\mfp^N)^n$, whereas
  $W_0(\lri) = \bf0$, whence
$$\# W_N(\lri) = \begin{cases} (1-q^{-n})q^{nN} & \textup{ if } N>0,\\ 1 & \textup{ otherwise.}\end{cases}$$
One checks immediately that, for all $N\in\N_0$,
$$\mcN^{\lri}_{N,\bfa} = \# W_N(\lri) \, \delta_{\bfa = ((0)^{(e(m,n))},(N)^{(\lfloor d(m,n)/2 \rfloor -e(m,n))})}.$$
Indeed, by Lemma~\ref{lem:coma}, for $N>0$ and any
$\bfy\in W_N(\lri)$, the two matrices
$$M_{m,n}(\bfy) \text{ and } \left( \begin{matrix} & -\Id_{e(m,n)} & \\ \Id_{e(m,n)} && \\&& \end{matrix}\right)\in\Mat_{d(m,n)}(\lri/\mfp^N).$$
are equivalent. By \cite[Proposition~2.18]{StasinskiVoll/14}, it
follows that
\begin{align*}
  \zeta_{\bfG_{m,n}(\lri)}(s) &= \sum_{N\in\N_0,\, \bfa \in\N_0^{\lfloor d(m,n)/2\rfloor}} \mcN^{\lri}_{N,\bfa} q^{-s\sum_{i=1}^{\lfloor d(m,n)/2 \rfloor} (N-a_i)s} \\
                              &= 1 + \sum_{N\in\N} (1-q^{-n})q^{nN}(q^{-s})^{Ne(m,n)}\\ &= 1 + (1-q^{-n})\frac{q^{n-se(m,n)}}{1-q^{n-se(m,n)}} = \frac{1-q^{-se(m,n)}}{1-q^{n-se(m,n)}}.
\end{align*}
The result follows from the well-known Euler factorization
$\zeta_K(s) = \prod_{\mfp} (1-q_{\mfp}^{-s})^{-1}$.
\end{proof}

\begin{remark}
For $n=1$, Theorem~\ref{thm:rep} generalizes the well-known formulae
for the representation zeta functions of the Heisenberg groups
$\bfH(\lri) = \bfG_{m,1}(\lri)$; see Example~\ref{exa:heisenberg} and
\cite[Theorem~B]{StasinskiVoll/14}. For $m=1$, we recover the
representation zeta functions of the Grenham groups $G_{n+1}$ (see
Example~\ref{exa:grenham}), computed by Snocken in his PhD-thesis
(see~\cite[Example~6.2]{Snocken/14}).
\end{remark}

We note an immediate consequence of Theorem~\ref{thm:rep} regarding
the topological representation zeta function
$\zeta_{\bfG_{m,n},\topo}(s)\in\Q(s)$ of $\bfG_{m,n}$;
cf.\ \cite[Definition~3.6]{Rossmann/16b}.

\begin{corollary}
$$\zeta_{\bfG_{m,n},\topo}(s) = \frac{se(m,n)}{se(m,n)-n}.$$
\end{corollary}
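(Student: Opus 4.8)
The plan is to read the answer off the explicit local computation already performed in the proof of Theorem~\ref{thm:rep}. There it is shown that, for \emph{every} compact discrete valuation ring $\lri$ with residue field cardinality $q$, the local representation zeta function is the single rational function
$$\zeta_{\bfG_{m,n}(\lri)}(s) = \frac{1-q^{-se(m,n)}}{1-q^{n-se(m,n)}}\in\Q(q^{-s}),$$
whose numerator and denominator are each (here: single) factors of the form $1-q^{a-bs}$ with $a\in\N_0$, $b\in\N$. In particular this expression is independent of $\lri$, so it is exactly the datum from which Rossmann's topological representation zeta function $\zeta_{\bfG_{m,n},\topo}(s)$ is extracted; cf.\ \cite[Definition~3.6]{Rossmann/16b}.

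Next I would recall that, for a local factor of this shape, the topological representation zeta function is the $q\to 1$ limit obtained after cancelling the common factor of $q-1$ that both numerator and denominator acquire at $q=1$. Concretely, set $q=1+t$ and expand to first order: $1-q^{-se(m,n)} = s\,e(m,n)\,t + O(t^2)$ and $1-q^{n-se(m,n)} = (s\,e(m,n)-n)\,t + O(t^2)$. Dividing out the factor $t$ and letting $t\to 0$ gives
$$\zeta_{\bfG_{m,n},\topo}(s) = \frac{s\,e(m,n)}{s\,e(m,n)-n},$$
which is the claimed identity. (For $n=1$, with $e(m,1)=1$, this specialises to the topological representation zeta function $\tfrac{s}{s-1}$ of the Heisenberg groups, a consistency check.)

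I expect no genuine obstacle here: the only points needing a word of justification are that the formula for $\zeta_{\bfG_{m,n}(\lri)}(s)$ is uniform in $\lri$ (hence in $q$) and that the single power of $t$ cancels cleanly, both of which are immediate from the displayed local formula. Alternatively, one could re-run the topological version of the counting argument behind Theorem~\ref{thm:rep}, replacing the point count $\#W_N(\lri)=(1-q^{-n})q^{nN}$ by its topological analogue and using that $M_{m,n}(\bfy)$ is equivalent to a fixed split form for all $\bfy\ne\mathbf 0$ (Lemma~\ref{lem:coma}); this yields the same rational function in $s$.
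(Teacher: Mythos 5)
Your proposal is correct and matches the paper's (implicit) argument: the corollary is stated as an immediate consequence of the uniform local formula $\zeta_{\bfG_{m,n}(\lri)}(s) = \frac{1-q^{-se(m,n)}}{1-q^{n-se(m,n)}}$ from Theorem~\ref{thm:rep}, and your first-order expansion in $q-1$ is exactly how the topological zeta function is read off such a uniform expression in Rossmann's framework. No gaps; the Heisenberg consistency check is a nice touch.
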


Consequently, all questions raised in
\cite[Section~7]{Rossmann/16b}---except possibly Question~7.3---have
positive answers for the group schemes $\bfG_{m,n}$.

\begin{funding}
  I am grateful for support by the German-Israeli Foundation for
  Scientific Research and Development (GIF) through grant no.\ 1246. 
\end{funding}

\begin{acknowledgements}
  I thank Angela Carnevale for stimulating discussions and a very
  helpful $\textsf{maple}$-sheet. To Tobias Rossmann I am grateful for
  many valuable conversations, in particular about topological and
  reduced zeta functions. I thank Uri Onn for teaching me many things
  over the years (including the word ``porism'') and Mark Berman for
  helpful comments on a draft of this paper. To an anonymous referee I
  am greatly indebted for numerous insightful comments and corrections
  of errors and oversights.
\end{acknowledgements}


\begin{thebibliography}{10}

\bibitem{BermanKlopschOnn/18}
M.~N. Berman, B.~Klopsch, and U.~Onn.
\newblock A family of class-2 nilpotent groups, their automorphisms and
  pro-isomorphic zeta functions.
\newblock {\em Math. Z.}, 290:909--935, 2018.

\bibitem{CSV/19}
A.~Carnevale, M.~Schein, and C.~Voll.
\newblock {Generalized Igusa functions and ideal growth of nilpotent Lie
  rings}.
\newblock preprint, \url{https://arxiv.org/abs/1903.03090}.

\bibitem{duSWoodward/08}
M.~P.~F. du~Sautoy and L.~Woodward.
\newblock {\em Zeta functions of groups and rings}, volume 1925 of {\em Lecture
  Notes in Mathematics}.
\newblock Springer-Verlag, Berlin, 2008.

\bibitem{Evseev/09}
A.~Evseev.
\newblock Reduced zeta functions of {L}ie algebras.
\newblock {\em J. Reine Angew. Math. (Crelle)}, 633:197--211, 2009.

\bibitem{GKP/94}
R.~L. Graham, D.~E. Knuth, and O.~Patashnik.
\newblock {\em Concrete mathematics: A foundation for computer science}.
\newblock Addison-Wesley Publishing Company, Reading, MA, second edition, 1994.

\bibitem{GSegal/84}
F.~J. Grunewald and D.~Segal.
\newblock Reflections on the classification of torsion-free nilpotent groups.
\newblock In {\em Group Theory. Essays for Philip Hall}, pages 121--158.
  Academic Press, London, 1984.

\bibitem{GSS/88}
F.~J. Grunewald, D.~Segal, and G.~C. Smith.
\newblock Subgroups of finite index in nilpotent groups.
\newblock {\em Invent. Math.}, 93:185--223, 1988.

\bibitem{LeeVoll/18}
S.~Lee and C.~Voll.
\newblock Enumerating graded ideals in graded rings associated to free
  nilpotent {L}ie rings.
\newblock {\em Math. Z.}, 290(3-4):1249--1276, 2018.

\bibitem{Rossmann/15}
T.~Rossmann.
\newblock Computing topological zeta functions of groups, algebras, and
  modules, {I}.
\newblock {\em Proc. Lond. Math. Soc. (3)}, 110(5):1099--1134, 2015.

\bibitem{Rossmann/16b}
T.~Rossmann.
\newblock Topological representation zeta functions of unipotent groups.
\newblock {\em J. Algebra}, 448:210--237, 2016.

\bibitem{Rossmann/18}
T.~Rossmann.
\newblock Computing local zeta functions of groups, algebras, and modules.
\newblock {\em Trans. Amer. Math. Soc.}, 370(7):4841--4879, 2018.

\bibitem{SV1/15}
M.~M. Schein and C.~Voll.
\newblock {Normal zeta functions of the Heisenberg groups over number rings~I
  -- the unramified case}.
\newblock {\em J. Lond. Math. Soc. (2)}, 91(1):19--46, 2015.

\bibitem{SV2/16}
M.~M. Schein and C.~Voll.
\newblock Normal zeta functions of the {H}eisenberg groups over number
  rings~{II}---the non-split case.
\newblock {\em Israel J. Math.}, 211(1):171--195, 2016.

\bibitem{Snocken/14}
R.~Snocken.
\newblock {\em Zeta Functions of Groups and Rings}.
\newblock PhD thesis, University of Southampton, 2014.
\newblock \url{https://eprints.soton.ac.uk/372833/}.

\bibitem{StasinskiVoll/14}
A.~Stasinski and C.~Voll.
\newblock {Representation zeta functions of nilpotent groups and generating
  functions for Weyl groups of type $B$}.
\newblock {\em Amer. J. Math.}, 136(2):501--550, 2014.

\bibitem{Voll/04}
C.~Voll.
\newblock {Zeta functions of groups and enumeration in Bruhat-Tits buildings}.
\newblock {\em Amer. J. Math.}, 126:1005--1032, 2004.

\bibitem{Voll/05}
C.~Voll.
\newblock {Functional equations for local normal zeta functions of nilpotent
  groups}.
\newblock {\em Geom. Func. Anal. (GAFA)}, 15:274--295, 2005.
\newblock With an appendix by A. Beauville.

\bibitem{VollBLMS/06}
C.~Voll.
\newblock {Counting subgroups in a family of nilpotent semidirect products}.
\newblock {\em Bull. London Math. Soc.}, 38:743--752, 2006.

\bibitem{Voll/10}
C.~Voll.
\newblock {Functional equations for zeta functions of groups and rings}.
\newblock {\em Ann. of Math. (2)}, 172(2):1181--1218, 2010.

\bibitem{Voll/17}
C.~Voll.
\newblock {Local functional equations for submodule zeta functions associated
  to nilpotent algebras of endomorphisms}.
\newblock {\em Int.\ Math.\ Res.\ Not.\ IMRN}, (7):2137--2176, 2019.
\newblock doi: 10.1093/imrn/rnx186.

\end{thebibliography}
\def\cprime{$'$}

\end{document}